\newtheorem{thm}{Theorem}[section]
\newtheorem{cor}{Corollary}[section]
\newtheorem{prop}{Proposition}[section]
\newtheorem{lem}{Lemma}[section]
\newtheorem{rem}{Remark}[section]
\newtheorem{exa}{Example}[section]
\theoremstyle{Problem}
\theoremstyle{definition}
\newtheorem{defn}{Definition}[section]
\numberwithin{equation}{section}
\newcommand{\pp}{\mathbb{P}}
\newcommand{\ee}{\mathbb{E}}
\newcommand{\bb}{\mathcal{B}}
\newcommand{\FF}{\mathcal{F}}
\newcommand{\dd}{\mathcal{D}}
\newcommand{\rr}{\mathbb{R}}
\def\beq{\begin{equation}}
\def\deq{\end{equation}}
\begin{document}
\title[MDP for plug-in estimators]
{Moderate deviation principle for plug-in estimators of
diversity indices on countable alphabets}
\thanks{This work is supported by National Natural Science Foundation of China (NSFC-11971154).}

\author[Z. H. Yu]{Zhenhong Yu}
\address[Z. H. Yu]{School of Mathematics and Statistics, Henan Normal University, Henan Province, 453007, China.} \email{\href{mailto: Z. H. Yu
<zhenhongyu2022@126.com>}{zhenhongyu2022@126.com}}

\author[Y. Miao]{Yu Miao}
\address[Y. Miao]{School of Mathematics and Statistics, Henan Normal University, Henan Province, 453007, China.} \email{\href{mailto: Y. Miao
<yumiao728@gmail.com>}{yumiao728@gmail.com}; \href{mailto: Y. Miao <yumiao728@126.com>}{yumiao728@126.com}}

\begin{abstract}
In the present paper, we consider the moderate deviation principle for the plug-in estimators of a large class of diversity indices on countable alphabets, where the distribution may change with the sample size. Our results cover some of the most commonly used indices, including Tsallis entropy, Re\'{n}yi
entropy and Hill diversity number.
\end{abstract}

\keywords{Diversity indices, plug-in estimators, moderate deviation principle, Tsallis entropy, Re\'{n}yi
entropy, Hill diversity number}
\subjclass[2020]{94A24, 62G05, 62G20, 60F10}
\maketitle
\section{Introduction}
Diversity is a fundamental concept across numerous scientific disciplines. Historically, the interest stems from ecological applications, where the diversity of species in an ecosystem is a relevant issue. Other applications include cancer research, where the interest is in the diversity of types of cancer cells in a tumour, and linguistics, where it is in the diversity of an author's vocabulary. More generally, in information science, one is interested in the diversity of letters drawn from some alphabet. A diversity index is a measures of the amount of variability or randomness in a probability distribution on an alphabet. where there is no natural ordering and moments, such as variance and standard deviation, are not defined. Two of the earliest diversity
indices to appear in the literature are Shannon's entropy and Simpson's index. Since then, many indices have been developed.
 Patil and Taillie \cite{P-T} presented an up-to-date description of different approaches to diversity, a concept whose usage from ecology to linguistics, from economics to genetics is known. Grabchak et al. \cite{G} introduced the generalized Simpson's entropy as a measure of diversity and investigate its properties. Based on comparing the entropy of the two samples, Grabchak et al. \cite{G-Z-Z}
 proposed a new methodology for testing the authorship of a relatively small work compared with the large body of an author's cannon.
 Grabchak et al. \cite{G-C-Z} gave a new methodology for testing whether two writing samples were written by the same author. More generally, in information science, one is interested in the diversity of letters drawn from some alphabet. Zhang and Grabchak \cite{Z-G} showed that a large class of diversity indices in the literature can be represented by linear combinations of an entropic basis, and proposed a class of nonparametric estimators of such linear diversity indices.

To evaluate a diversity index, the most popular approach may be to use the so-called `plug-in' estimator, where one evaluates the diversity index on the empirical distribution, and thus `plugs' the empirically observed probabilities into the formula for the diversity index. The plug-in estimator is one of the most common and serves as a foundation for constructing further estimators. Therefore, understanding the statistical properties of the plug-in estimator is crucial for comprehending many related estimators. Most work of the plug-in estimator in the area of diversity indices has focused on the case of finite alphabets.
For example, Zhang and Grabchak \cite{Z-G} gave a characterisation of all diversity indices, including those on countably infinite alphabets, the asymptotic properties of the plug-in (and related estimators) are only shown for finite alphabets. In fact, there is relatively little research on the asymptotic properties of interpolation on countably infinite alphabets. The results that we have seen in the literature are specifically related to Shannon's entropy. For which, the asymptotically normal for the plug-in estimator of Shannon's entropy defined on a countable alphabet was proved, in two steps, in Paninski \cite{P} and Zhang and Zhang \cite{ZhZh12}.
Grabchak and Zhang \cite{G-Z} studied the asymptotic
distribution of the plug-in estimator for a large class of diversity indices on countable alphabets.
 In particular, they gave conditions for the plug-in estimator
to be asymptotically normal, and in the case of uniform distributions,
where asymptotic normality fails, they gave conditions for the
asymptotic distribution to be chi-squared. Their results covered some of
the most commonly used indices, including Simpson's index, Re\'nyi's
entropy and Shannon's entropy.

In the present paper, we shall study the moderate deviation principle of  the plug-in estimator for a large class of diversity indices along the work in Grabchak and Zhang \cite{G-Z}.
In Section 2, we sate the main results. In Section 3, we discuss some examples to show that these conditions can be satisfied. In particular, we give the moderate deviation principle for Tsallis entropy, Re\'{n}yi
entropy and Hill diversity number. The proofs of our results will be given in Section 4.

\section{Main results}

Let $\mathcal{A}=\{a_k, k\ge 1\}$ be a countably infinite alphabet with associated probability measures $\mathbf{P}_n=\{p_{n,k}, k\ge 1\}$ for each $n$, where the distribution may change with the sample size. The letters of $\mathcal{A}$ correspond to species in an ecosystem, words in the English language, types of cancer cells in a tumour, or another quantity, whose diversity is of interest. We allow some (even countably many) $p_{n,k}$s to be zero. Thus finite alphabets are a special case of this model.

For each $n$, a diversity index is a function $\theta$ that maps $\mathbf{P}_n$ into $\rr$. A common assumption is that
\beq\label{theta}
\theta_n=\theta(\mathbf{P}_n)=\sum_{i=1}^\infty g\left(p_{n,i}\right),
\deq
where $g: [0,1]\to \rr$. Such indices (under a slightly different parametrisation) were called dichotomous indices in Patil and Taillie \cite{P-T}. To ensure that the index is well defined, we assume that
\beq\label{theta1}
\sum_{i=1}^\infty |g\left(p_{n,i}\right)|<\infty,\ \ \ \text{for each}\ n.
\deq
For each $n\ge 1$, let $\{X_{k,n}, 1\le k\le n\}$ be an array of independent and identically distributed random variables taking values in some countably infinite alphabet $\mathcal{A}$ with common distribution $\mathbf{P}_n$, i.e.,
$$
p_{n,k}=\pp(X_{1,n}=a_k),  \ \ \  \ k\ge 1, \ n\ge 1.
$$
For each $k$, let
$$
\hat p_{n,k}:=\frac{1}{n}\sum_{i=1}^nI_{\{X_{i,n}=a_k\}}
$$
be the sample proportion. The plug-in estimator of $\theta$ is given by
\beq\label{theta2}
\hat\theta_n=\sum_{k=1}^\infty g(\hat p_{n,k}).
\deq
To state our main results, we need the following definition.
\begin{defn}\label{def1}
Fix $\beta\in(0,1]$. A function $g:[0,1] \mapsto \rr$ is called $\beta$-H\"older continuous if there is a constant $K > 0$ such that, for any $x,y \in [0,1]$, we have
$$
|g(x)-g(y)|\leq K|x-y|^\beta.
$$
\end{defn}
It is easy to see that every $\beta$-H\"older continuous function is continuous and bounded on a closed interval. It is well known that $1$-H\"older continuous function is also called Lipschitz continuous function, and any function with a bounded derivative is Lipschitz continuous.

Firstly, we consider the case that $g'$ is Lipschitz continuous.
\begin{thm}\label{thm1}
 Suppose the function $g:[0,1] \mapsto \rr$ is differentiable and its derivative $g'$ is Lipschitz
continuous. Let
\beq\label{thm1-1}
\sigma^2_n=\sum_{i=1}^{\infty}p_{n,i}\left(g'\left( p_{n,i}\right)\right)^2-\left(\sum_{i=1}^{\infty}p_{n,i}g'\left( p_{n,i}\right)\right)^2.
\deq
Then for any $r > 0$, we have
\beq\label{thm1-2}
\lim_{n\rightarrow\infty}\frac{1}{b_n^2}\log\pp
\left(\frac{\sqrt{n}}{b_n\sigma_n}|\hat{\theta}_n-\theta_n|>r\right)= -\frac{r^2}{2}
\deq
where the moderate deviation scale $\{b_n, n\ge 1\}$ is a sequence of positive numbers satisfying
\beq\label{con1}
b_n\rightarrow\infty\ \ and\ \ \frac{b_n}{\sqrt{n}\sigma_n}\rightarrow 0.
\deq
\end{thm}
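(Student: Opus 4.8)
The plan is to reduce the statement to a moderate deviation principle (MDP) for a sum of i.i.d.\ bounded random variables via a first-order Taylor expansion, and then to prove that the nonlinear remainder is superexponentially negligible at speed $b_n^2$. Since $g'$ is Lipschitz with some constant $L$, for each $k$ I write
\[
g(\hat p_{n,k})-g(p_{n,k})=g'(p_{n,k})(\hat p_{n,k}-p_{n,k})+R_{n,k},\qquad |R_{n,k}|\le\tfrac{L}{2}(\hat p_{n,k}-p_{n,k})^2 ,
\]
so that $\hat\theta_n-\theta_n=L_n+\mathrm{Rem}_n$ with linear part
\[
L_n=\sum_{k\ge1}g'(p_{n,k})(\hat p_{n,k}-p_{n,k})=\frac1n\sum_{i=1}^n Y_{i,n},\qquad Y_{i,n}:=g'(p_{n,K_i})-\sum_{k\ge1}p_{n,k}g'(p_{n,k}),
\]
where $K_i$ is the index of the observed letter $X_{i,n}$. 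The $Y_{i,n}$ are i.i.d., centred, and (as $g'$ is bounded on $[0,1]$, say $|g'|\le M$) satisfy $|Y_{i,n}|\le 2M$ with $\ee Y_{1,n}^2=\sigma_n^2$, the quantity in \eqref{thm1-1}. Thus $Z_n:=\frac{\sqrt n}{\sigma_n}L_n=\frac{1}{\sigma_n\sqrt n}\sum_i Y_{i,n}$ is a normalised triangular-array sum.

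Next I would prove the MDP for the linear term by applying the G\"artner--Ellis theorem to $Z_n/b_n$ at speed $b_n^2$. The scaled cumulant generating function is
\[
\frac{1}{b_n^2}\log\ee\!\left[e^{\,b_n\lambda Z_n}\right]=\frac{n}{b_n^2}\,\log\ee\!\left[\exp\!\Big(\tfrac{b_n\lambda}{\sigma_n\sqrt n}\,Y_{1,n}\Big)\right].
\]
Writing $t_n=\frac{b_n\lambda}{\sigma_n\sqrt n}$, the second condition in \eqref{con1} gives $t_n\to0$, and the boundedness of $Y_{1,n}$ lets me expand $\log\ee e^{t_nY_{1,n}}=\tfrac12\sigma_n^2t_n^2(1+o(1))$, the higher cumulants being controlled through $\ee|Y_{1,n}|^3\le 2M\sigma_n^2$, so that the error, after multiplication by $n/b_n^2$, is $O\big(b_n/(\sigma_n\sqrt n)\big)\to0$. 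Hence the limit equals $\lambda^2/2$ for every $\lambda$, and G\"artner--Ellis yields an LDP for $Z_n/b_n$ with good rate function $x^2/2$; evaluating on $\{|x|>r\}$ gives $\lim_n b_n^{-2}\log\pp(|Z_n|/b_n>r)=-r^2/2$. The two hypotheses enter precisely here: $b_n\to\infty$ makes the speed diverge, while $b_n/(\sqrt n\sigma_n)\to0$ (together with $\sigma_n\le M$, which forces $b_n^2/n\to0$) makes the exponential tilt vanish, leaving only the Gaussian term.

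The main work --- and the \emph{principal obstacle} --- is to show that $\mathrm{Rem}_n$ is exponentially equivalent to $0$, i.e.\ that for every $\delta>0$,
\[
\lim_{n\to\infty}\frac{1}{b_n^2}\log\pp\!\left(\frac{\sqrt n}{b_n\sigma_n}\,|\mathrm{Rem}_n|>\delta\right)=-\infty .
\]
By the remainder bound this reduces to the quadratic statistic $Q_n:=\sum_{k\ge1}(\hat p_{n,k}-p_{n,k})^2$, namely to proving $b_n^{-2}\log\pp(nQ_n>\gamma_n)\to-\infty$ with $\gamma_n:=\frac{2\delta\,b_n\sigma_n\sqrt n}{L}\to\infty$. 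Here $\ee[nQ_n]=1-\sum_kp_{n,k}^2\le1$, so I am estimating the probability that $nQ_n$ exceeds a level $\gamma_n$ tending to infinity far faster than its mean, and crucially $\gamma_n/b_n^2=\frac{2\delta\sigma_n\sqrt n}{Lb_n}\to\infty$ because $\sigma_n\sqrt n/b_n\to\infty$. The difficulty is that $Q_n$ is a sum over a countably infinite alphabet of squared multinomial deviations and does not possess a finite exponential moment; I would therefore pass to independent counts by Poissonisation, apply a Bennett/Bernstein estimate to the independent squared deviations $\tfrac1n(S_{n,k}-np_{n,k})^2$ (whose large-deviation regime produces a tail of order $\exp(-c\gamma_n)$), and de-Poissonise at the cost of a polynomial-in-$n$ factor. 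Since $\gamma_n/b_n^2\to\infty$, the resulting bound is sent to $-\infty$ on the $b_n^{-2}\log$ scale, which gives the exponential equivalence; combining it with the linear MDP through the standard transfer lemma for exponentially equivalent families then yields \eqref{thm1-2}. I expect the delicate point to be exactly this last estimate: making the concentration of $Q_n$ on an infinite alphabet strong enough to beat the speed $b_n^2$ uniformly --- in particular absorbing the de-Poissonisation factor against $\gamma_n$ --- using only the two scaling hypotheses in \eqref{con1}.
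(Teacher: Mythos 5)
Your treatment of the linear term is exactly the paper's argument: the same centred, bounded summands $T_{k,n}=g'(p_{n,K_k})-\sum_i p_{n,i}g'(p_{n,i})$, the same G\"artner--Ellis computation at speed $b_n^2$ with the third-moment control $\ee|T_{1,n}|^3\le 2M\sigma_n^2$ giving a relative error $O\bigl(b_n/(\sqrt{n}\sigma_n)\bigr)\to 0$, and the same reduction of the remainder (via the Lipschitz--Taylor bound) to showing that $n\sum_i(\hat p_{n,i}-p_{n,i})^2$ exceeds a level $\gamma_n\asymp b_n\sigma_n\sqrt{n}$ with probability that is superexponentially small at speed $b_n^2$. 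The divergence is in how you bound this quadratic tail, and there your plan has a genuine gap.

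The de-Poissonisation step costs a factor polynomial in $n$, i.e.\ an additive $\frac{c\log n}{b_n^2}$ on the $b_n^{-2}\log$ scale, and the hypotheses \eqref{con1} impose no lower growth rate on $b_n$ beyond $b_n\to\infty$, while allowing $\sigma_n\to 0$ nearly as fast as $n^{-1/2}$. Consequently $\gamma_n=\frac{2\delta}{L}b_n\sigma_n\sqrt{n}$ can be $o(\log n)$ while $\log n/b_n^2\to\infty$, and the factor cannot be absorbed. Concretely, take the two-point near-uniform family of Example 3.1 with $p_{n,1}-\tfrac12=\tfrac12(\log\log n)^2/\sqrt{n}$, so that $\sqrt{n}\sigma_n\asymp(\log\log n)^2$, and take $b_n=\log\log n$: both conditions in \eqref{con1} hold, yet $\gamma_n\asymp(\log\log n)^3\ll\log n$, the bound $\sqrt{n}\,e^{-c\gamma_n}$ exceeds $1$, and $b_n^{-2}\log\bigl(\sqrt{n}\,e^{-c\gamma_n}\bigr)\to+\infty$. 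Your route would be sound for the fixed-distribution case (Theorem \ref{cor1}, where $\sigma>0$ is constant and $\gamma_n\asymp b_n\sqrt{n}\gg\log n$), but it fails precisely in the triangular-array regime with $\sigma_n\to0$ that Theorem \ref{thm1} is designed to cover. The paper avoids any prefactor: it splits $\sum_i(\hat p_{n,i}-p_{n,i})^2$ into the diagonal part, which is deterministically at most $2n=o(\varepsilon\sigma_n b_n n^{3/2})$ so its exceedance probability is eventually zero, and the off-diagonal part, a degenerate U-statistic of order two with bounded kernel, to which it applies the Houdr\'e--Reynaud-Bouret exponential inequality (Lemma \ref{lem-hr}) with $u_n=b_n\sqrt{n}\sigma_n/l_n$, obtaining a clean tail $3e^{-u_n}$ with $u_n/b_n^2\to\infty$. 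If you want to stay closer to your concentration-of-$Q_n$ viewpoint, a prefactor-free repair is available without Poissonisation: bounded differences (McDiarmid) applied to $\|\hat p_n-p_n\|_2$, which changes by at most $\sqrt{2}/n$ per sample and has mean at most $1/\sqrt{n}$, gives $\pp\bigl(n\sum_i(\hat p_{n,i}-p_{n,i})^2>\gamma_n\bigr)\le e^{-\gamma_n/4}$ for all large $n$, and $\gamma_n/b_n^2\to\infty$ then finishes the argument. As written, however, the Poissonisation/Bernstein/de-Poissonisation step does not close.
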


\begin{rem} Since $g'$ is Lipschitz continuous, then $g'$ is also bounded. It is easy to check that $\sigma_n^2<M$ for some positive constant $M$. Furthermore, the theorem allows for the case $\sigma_n^2\to 0$ so long as the convergence is not too fast.
\end{rem}

For this result to be useful for inference, we need a way to estimate $\sigma_n^2$.
\begin{cor}\label{cor2}
Under the assumptions of Theorem \ref{thm1}, let
\beq\label{cor2-1}
\hat\sigma^2_n=\sum_{i=1}^{\infty}\hat p_{n,i}\left(g'\left( \hat p_{n,i}\right)\right)^2-\left(\sum_{i=1}^{\infty}\hat p_{n,i}g'\left( \hat p_{n,i}\right)\right)^2.
\deq
If $\liminf_{n\to\infty} \sigma_n^2>0$, then for any $r > 0$, we have
\beq\label{cor2-2}
\lim_{n\rightarrow\infty}\frac{1}{b_n^2}\log\pp
\left(\frac{\sqrt{n}}{b_n\hat \sigma_n}|\hat{\theta}_n-\theta_n|>r\right)= -\frac{r^2}{2}.
\deq
\end{cor}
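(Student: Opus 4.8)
The overall strategy is to deduce the corollary from Theorem \ref{thm1} by an exponential-equivalence (change-of-normalisation) argument. Theorem \ref{thm1} already gives the moderate deviation principle for $Y_n:=\frac{\sqrt n}{b_n\sigma_n}|\hat\theta_n-\theta_n|$, and writing $\hat Y_n:=\frac{\sqrt n}{b_n\hat\sigma_n}|\hat\theta_n-\theta_n|=\frac{\sigma_n}{\hat\sigma_n}\,Y_n$, everything reduces to showing that the random factor $\sigma_n/\hat\sigma_n$ tends to $1$ fast enough not to perturb the rate. The precise statement I would establish is that $\hat\sigma_n^2$ concentrates on $\sigma_n^2$ super-exponentially on the scale $b_n^2$, namely, for every $\delta>0$,
\[
\lim_{n\to\infty}\frac{1}{b_n^2}\log\pp\big(|\hat\sigma_n^2-\sigma_n^2|>\delta\big)=-\infty .
\]
Since $\liminf_{n\to\infty}\sigma_n^2>0$, this absolute bound is equivalent to the relative statement $\hat\sigma_n^2/\sigma_n^2\to1$ super-exponentially, which is exactly what the transfer argument consumes.

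To prove this concentration, I would write $\sigma_n^2=a_n-\mu_n^2$ and $\hat\sigma_n^2=A_n-B_n^2$, where $a_n=\sum_i p_{n,i}(g'(p_{n,i}))^2$, $\mu_n=\sum_i p_{n,i}g'(p_{n,i})$ and $A_n,B_n$ are their empirical analogues. Because $g'$ is bounded on $[0,1]$ (being Lipschitz), $B_n+\mu_n$ is bounded and $|\hat\sigma_n^2-\sigma_n^2|\le|A_n-a_n|+|B_n+\mu_n|\,|B_n-\mu_n|$, so it suffices to control $A_n-a_n$ and $B_n-\mu_n$. For the latter I use the splitting
\[
B_n-\mu_n=\sum_i(\hat p_{n,i}-p_{n,i})g'(p_{n,i})+\sum_i\hat p_{n,i}\big(g'(\hat p_{n,i})-g'(p_{n,i})\big),
\]
and similarly for $A_n-a_n$, with the bounded coefficient $g'(p_{n,i})$ replaced by $(g'(p_{n,i}))^2$ and using that $(g')^2$ is again Lipschitz. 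The first sum equals $\frac1n\sum_{j=1}^n Z_{j,n}$ with $Z_{j,n}=\sum_i g'(p_{n,i})\big(I_{\{X_{j,n}=a_i\}}-p_{n,i}\big)$ i.i.d., bounded and centred, so Hoeffding's inequality gives $\pp(|\frac1n\sum_j Z_{j,n}|>\delta)\le 2\exp(-cn\delta^2)$; since \eqref{con1} together with the boundedness of $\sigma_n^2$ forces $n/b_n^2\to\infty$, this is super-exponentially small on the scale $b_n^2$. For the nonlinear sum, the Lipschitz property of $g'$ and the Cauchy--Schwarz inequality give
\[
\Big|\sum_i\hat p_{n,i}\big(g'(\hat p_{n,i})-g'(p_{n,i})\big)\Big|\le K\sum_i\hat p_{n,i}|\hat p_{n,i}-p_{n,i}|\le K\Big(\sum_i(\hat p_{n,i}-p_{n,i})^2\Big)^{1/2},
\]
using $\sum_i\hat p_{n,i}=1$ and $\hat p_{n,i}\le1$, so it remains to show that $\sum_i(\hat p_{n,i}-p_{n,i})^2$ is super-exponentially small. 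This follows from McDiarmid's bounded-difference inequality: its expectation is at most $1/n$, and altering one sample changes it by $O(1/n)$, whence $\pp(\sum_i(\hat p_{n,i}-p_{n,i})^2>\delta)\le\exp(-c'n\delta^2)$, again negligible because $n/b_n^2\to\infty$.

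With the super-exponential concentration in hand, I would conclude by sandwiching. Fix $\varepsilon\in(0,1)$ and set $G_n=\{|\hat\sigma_n^2/\sigma_n^2-1|\le\varepsilon\}$, so that $\pp(G_n^c)$ is super-exponentially small (here $\liminf_n\sigma_n^2>0$ is used to convert the relative error into an absolute one), while on $G_n$ one has $(1+\varepsilon)^{-1/2}\le\sigma_n/\hat\sigma_n\le(1-\varepsilon)^{-1/2}$. For the upper bound, $\pp(\hat Y_n>r)\le\pp(Y_n>r\sqrt{1-\varepsilon})+\pp(G_n^c)$, and since the second term is negligible on the scale $b_n^2$ while the first is controlled by Theorem \ref{thm1}, one obtains $\limsup_n b_n^{-2}\log\pp(\hat Y_n>r)\le-r^2(1-\varepsilon)/2$. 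For the lower bound, $\pp(\hat Y_n>r)\ge\pp(Y_n>r\sqrt{1+\varepsilon})-\pp(G_n^c)$, and as $\pp(G_n^c)$ is super-exponentially smaller than $\pp(Y_n>r\sqrt{1+\varepsilon})$, the subtraction is harmless at log-scale, giving $\liminf_n b_n^{-2}\log\pp(\hat Y_n>r)\ge-r^2(1+\varepsilon)/2$. Letting $\varepsilon\downarrow0$ yields \eqref{cor2-2}.

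I expect the main obstacle to be the countable alphabet: the naive route through the total variation $\sum_i|\hat p_{n,i}-p_{n,i}|$ fails because its expectation need not be small when $\sum_i\sqrt{p_{n,i}}=\infty$. The crucial technical point is therefore the passage, via Cauchy--Schwarz, from the weighted first-order quantity $\sum_i\hat p_{n,i}|\hat p_{n,i}-p_{n,i}|$ to the quadratic quantity $\sum_i(\hat p_{n,i}-p_{n,i})^2$, whose expectation is $O(1/n)$ and which enjoys a clean bounded-difference concentration uniformly over the (possibly infinite) alphabet.
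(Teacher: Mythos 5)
Your proposal is correct, and its skeleton coincides with the paper's: both reduce (\ref{cor2-2}) to Theorem \ref{thm1} by sandwiching on the event $\left\{\left|\hat\sigma_n^2/\sigma_n^2-1\right|\le\varepsilon\right\}$, both rest on the super-exponential concentration $\frac{1}{b_n^2}\log\pp\left(\left|\hat\sigma_n^2-\sigma_n^2\right|>\delta\right)\to-\infty$ obtained from the same splitting of $\hat\sigma_n^2-\sigma_n^2$ (using that $g'$ is bounded and that $(g')^2$ inherits the Lipschitz property), and both invoke $\liminf_{n\to\infty}\sigma_n^2>0$ only to convert absolute into relative error. Where you genuinely diverge is the engine behind the concentration of the empirical sums $\sum_i \hat p_{n,i}f(\hat p_{n,i})$ for $f=g'$ and $f=(g')^2$: the paper proves this as Lemma \ref{lemL} by recycling the machinery of Theorem \ref{thm1} --- the linear term is handled by the same exponential-moment/G\"artner--Ellis computation as claim (\ref{thm-main1}), and the quadratic quantity $\sum_i(\hat p_{n,i}-p_{n,i})^2$ is controlled through claim (\ref{thm-main2}), i.e.\ via the Houdr\'e--Reynaud-Bouret inequality for degenerate $U$-statistics (Lemma \ref{lem-hr}) --- whereas you use Hoeffding's inequality for the bounded centred linear term and McDiarmid's bounded-difference inequality for $\sum_i(\hat p_{n,i}-p_{n,i})^2$, exploiting $\ee\sum_i(\hat p_{n,i}-p_{n,i})^2\le 1/n$ and a per-sample oscillation of order $1/n$. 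Your route is more elementary and self-contained: at a fixed threshold $\delta$ it yields bounds of order $e^{-cn\delta^2}$, far stronger than the $e^{-\omega_n b_n^2}$ (with $\omega_n\to\infty$) actually needed, using, as you correctly note, that $n/b_n^2\to\infty$ follows from (\ref{con1}) and the boundedness of $\sigma_n$; it thereby avoids the $U$-statistic apparatus entirely, which the paper only deploys here because Theorem \ref{thm1} already required the remainder at the much finer scale $b_n\sigma_n/\sqrt{n}$, so that reusing it in Lemma \ref{lemL} costs nothing. Your two-term decomposition of $\sum_i\hat p_{n,i}g'(\hat p_{n,i})-\sum_i p_{n,i}g'(p_{n,i})$, together with the Cauchy--Schwarz passage from $\sum_i\hat p_{n,i}|\hat p_{n,i}-p_{n,i}|$ to $\left(\sum_i(\hat p_{n,i}-p_{n,i})^2\right)^{1/2}$, is an equivalent regrouping of the paper's three-term split in Lemma \ref{lemL}, and both correctly route around the total-variation quantity, which need not be small on a countably infinite alphabet.
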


Let $\{X_n, n\ge 1\}$ be a sequence of independent and identically distributed random variables taking values in alphabet $\mathcal{A}=\{a_k, k\ge 1\}$ with distribution
$\mathbf{P}=\{p_{n}, n\ge 1\}$ i.e.,
$$
p_{n}=\pp(X_{1}=a_n),  \ \ \  \ n\ge 1.
$$
For each $k$, let
$$
\hat p_{k}:=\frac{1}{n}\sum_{i=1}^nI_{\{X_{i}=a_k\}}
$$
be the sample proportion. The plug-in estimator of $\theta$ is given by
\beq\label{theta2}
\hat\theta_n=\sum_{k=1}^\infty g(\hat p_{k}).
\deq

\begin{thm}\label{cor1}
Suppose the function $g$ is differentiable on $[0,1]$ and its derivative $g'$ is Lipschitz
continuous. Let
\beq\label{cor1-1}
\sigma^2=\sum_{i=1}^{\infty}p_{i}\left(g'\left( p_{i}\right)\right)^2-\left(\sum_{i=1}^{\infty}p_{i}g'\left( p_{i}\right)\right)^2.
\deq\label{cor1-2}
Then for any $r > 0$, we have
\beq\label{cor1-2}
\lim_{n\rightarrow\infty}\frac{1}{b_n^2}\log\pp
\left(\frac{\sqrt{n}}{b_n\sigma}|\hat{\theta}_n-\theta|>r\right)= -\frac{r^2}{2}
\deq
where the moderate deviation scale $\{b_n, n\ge 1\}$ is a sequence of positive numbers satisfying
$$
b_n\rightarrow\infty\ \ \text{and}\ \ \frac{b_n}{\sqrt{n}}\to 0.
$$
\end{thm}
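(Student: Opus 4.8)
The plan is to obtain Theorem \ref{cor1} as the special case of Theorem \ref{thm1} in which the underlying distribution does not vary with the sample size. Concretely, I would set $\mathbf{P}_n=\mathbf{P}$ for every $n$ — that is, take $p_{n,k}=p_k$ and $X_{k,n}=X_k$ — so that the triangular-array framework of Theorem \ref{thm1} collapses onto the single-sequence framework used here. Under this identification the sample proportions $\hat p_{n,k}$ become the $\hat p_k$ defined above, the plug-in estimator $\hat\theta_n=\sum_k g(\hat p_k)$ is literally the same object in both statements, and the index values agree: $\theta_n=\theta(\mathbf{P})=\theta$ for all $n$. Moreover, the variance in \eqref{thm1-1} reduces to the constant $\sigma_n^2=\sigma^2$ of \eqref{cor1-1}.

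The only point that needs care is the transfer of the scaling hypothesis \eqref{con1}. Since $g'$ is Lipschitz and hence bounded, $\sigma^2$ is finite, and the statement (which divides by $\sigma$) presupposes $\sigma^2>0$; thus $\sigma_n=\sigma$ is a fixed positive constant. Consequently $b_n/(\sqrt{n}\,\sigma_n)=\sigma^{-1}b_n/\sqrt{n}$, so the two conditions $b_n\to\infty$ and $b_n/\sqrt{n}\to 0$ imposed here are exactly equivalent to \eqref{con1}. Invoking Theorem \ref{thm1} then yields the claim \eqref{cor1-2} verbatim.

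Should one instead wish to argue directly, the natural route is the linearization that already underlies Theorem \ref{thm1}: a first-order Taylor expansion $g(\hat p_k)-g(p_k)=g'(p_k)(\hat p_k-p_k)+R_k$, with the remainder bound $|R_k|\le \tfrac{K}{2}(\hat p_k-p_k)^2$ coming from the Lipschitz estimate on $g'$. The linear part rewrites as $\sum_k g'(p_k)(\hat p_k-p_k)=\tfrac{1}{n}\sum_{i=1}^n Y_i$, a normalized sum of the i.i.d., bounded, mean-zero variables $Y_i=g'(p_{J_i})-\ee[g'(p_{J_1})]$ (where $a_{J_i}=X_i$), whose common variance is precisely $\sigma^2$; the classical moderate deviation principle for bounded i.i.d. sums delivers the limit \eqref{cor1-2} for this term. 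The main obstacle in the direct approach — and the reason deducing the result from Theorem \ref{thm1} is preferable — is the superexponential negligibility of the remainder at the scale $\sqrt{n}/b_n$, namely showing $\frac{1}{b_n^2}\log\pp\big(\frac{\sqrt{n}}{b_n}|\sum_k R_k|>\varepsilon\big)\to-\infty$ for every $\varepsilon>0$, which is exactly the estimate already established within the proof of Theorem \ref{thm1}.
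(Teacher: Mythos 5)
Your proposal is correct and matches the paper's own argument: the paper likewise proves Theorem \ref{cor1} by observing that it is the special case of Theorem \ref{thm1} with $p_{n,i}=p_i$ fixed, so that $\sigma_n=\sigma$ is a positive constant and the scaling condition (\ref{con1}) reduces to $b_n\to\infty$, $b_n/\sqrt{n}\to 0$. Your careful note that positivity of $\sigma$ is implicitly presupposed (the paper instead asserts $\sigma^2>0$ for nonuniform distributions) and your sketch of the direct linearization route are consistent with, and slightly more explicit than, the paper's one-line proof.
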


\begin{cor}\label{cor3}
Under the assumptions of Theorem \ref{cor1}, let
\beq\label{cor3-1}
\hat\sigma^2_n=\sum_{i=1}^{\infty}\hat p_{i}\left(g'\left( \hat p_{i}\right)\right)^2-\left(\sum_{i=1}^{\infty}\hat p_{i}g'\left( \hat p_{i}\right)\right)^2.
\deq
If $\sigma^2>0$, then for any $r > 0$, we have
\beq\label{cor3-2}
\lim_{n\rightarrow\infty}\frac{1}{b_n^2}\log\pp
\left(\frac{\sqrt{n}}{b_n\hat \sigma_n}|\hat{\theta}_n-\theta_n|>r\right)= -\frac{r^2}{2}.
\deq
\end{cor}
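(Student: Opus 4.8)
The plan is to deduce the statement from Theorem \ref{cor1} by showing that the random normalization $\hat\sigma_n^2$ concentrates on $\sigma^2$ superexponentially fast at speed $b_n^2$, and then transferring the moderate deviation principle through an exponential-equivalence argument. Thus the first and main goal is the estimate
\beq\label{plan-key}
\lim_{n\to\infty}\frac{1}{b_n^2}\log\pp\left(\left|\hat\sigma_n^2-\sigma^2\right|>\varepsilon\right)=-\infty\qquad\text{for every }\varepsilon>0.
\deq
Write $\hat A_n=\sum_i\hat p_i(g'(\hat p_i))^2$, $A=\sum_i p_i(g'(p_i))^2$, $\hat B_n=\sum_i\hat p_i g'(\hat p_i)$ and $B=\sum_i p_i g'(p_i)$, so that $\hat\sigma_n^2=\hat A_n-\hat B_n^2$ and $\sigma^2=A-B^2$. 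Since $g'$ is Lipschitz it is bounded, say $|g'|\le C_0$, whence $|\hat B_n|,|B|\le C_0$ and $|\hat B_n^2-B^2|\le 2C_0|\hat B_n-B|$; therefore \eqref{plan-key} follows once I prove the analogous superexponential convergence of $\hat A_n-A$ and of $\hat B_n-B$ separately.

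Both $h=g'$ and $h=f:=(g')^2$ are Lipschitz and bounded on $[0,1]$, with some Lipschitz constant $L_h$. For each such $h$ I would split
\beq
\sum_i\hat p_i h(\hat p_i)-\sum_i p_i h(p_i)=\underbrace{\sum_i\hat p_i\bigl(h(\hat p_i)-h(p_i)\bigr)}_{(\mathrm{I})}+\underbrace{\sum_i(\hat p_i-p_i)h(p_i)}_{(\mathrm{II})}.
\deq
Term $(\mathrm{II})$ equals $\frac1n\sum_{j=1}^n(\xi_j-\ee\xi_1)$ with $\xi_j:=\sum_i h(p_i)I_{\{X_j=a_i\}}$ bounded i.i.d.\ random variables, so Hoeffding's inequality gives $\pp(|(\mathrm{II})|>\varepsilon)\le 2\exp(-cn\varepsilon^2)$, which is superexponentially small at speed $b_n^2$ because $b_n/\sqrt n\to0$ (so $b_n^2/n\to0$). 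For term $(\mathrm{I})$, the Lipschitz bound and $\sum_i\hat p_i|\hat p_i-p_i|\le D_n+\sqrt{D_n}$, where $D_n:=\sum_i(\hat p_i-p_i)^2$ and $\sum_i p_i|\hat p_i-p_i|\le(\sum_i p_i(\hat p_i-p_i)^2)^{1/2}\le\sqrt{D_n}$, give $|(\mathrm{I})|\le L_h(D_n+\sqrt{D_n})$, so it remains to control $D_n$. Here I would apply McDiarmid's bounded-difference inequality: altering a single observation changes $D_n$ by at most a constant multiple of $1/n$, so $\pp(|D_n-\ee D_n|>t)\le 2\exp(-c'nt^2)$; since $\ee D_n=\frac1n\sum_i p_i(1-p_i)\le 1/n\to0$, this yields the superexponential decay of $D_n$, hence of $(\mathrm{I})$, and thus \eqref{plan-key}.

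With \eqref{plan-key} established, I would conclude using $\sigma^2>0$. Note $\hat\sigma_n^2=\sum_i\hat p_i(g'(\hat p_i)-\hat B_n)^2\ge0$ is a sample variance; on $\{|\hat\sigma_n^2-\sigma^2|\le\sigma^2/2\}$ one has $\hat\sigma_n^2\ge\sigma^2/2$ and $|\sigma^2/\hat\sigma_n^2-1|\le2|\hat\sigma_n^2-\sigma^2|/\sigma^2$, so $\{|\sigma^2/\hat\sigma_n^2-1|>\delta\}\subseteq\{|\hat\sigma_n^2-\sigma^2|>\tfrac{\sigma^2}{2}\min(1,\delta)\}$ is superexponentially negligible by \eqref{plan-key}. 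Put $W_n=\frac{\sqrt n}{b_n\sigma}|\hat\theta_n-\theta|$, for which Theorem \ref{cor1} gives $\lim_n b_n^{-2}\log\pp(W_n>r)=-r^2/2$, and observe $\frac{\sqrt n}{b_n\hat\sigma_n}|\hat\theta_n-\theta|=W_n\,\sigma/\hat\sigma_n$. Fixing $\delta\in(0,1)$ and splitting $P_n:=\pp(\frac{\sqrt n}{b_n\hat\sigma_n}|\hat\theta_n-\theta|>r)$ over the event $\{|\sigma^2/\hat\sigma_n^2-1|\le\delta\}$, on which $\sqrt{1-\delta}\le\sigma/\hat\sigma_n\le\sqrt{1+\delta}$, and applying the elementary speed-$b_n^2$ rate inequalities for sums and differences of probabilities, I would obtain
\beq
-\frac{r^2}{2(1-\delta)}\le\liminf_{n\to\infty}\frac{1}{b_n^2}\log P_n\le\limsup_{n\to\infty}\frac{1}{b_n^2}\log P_n\le-\frac{r^2}{2(1+\delta)},
\deq
and letting $\delta\downarrow0$ proves \eqref{cor3-2}. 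The main obstacle I anticipate is \eqref{plan-key}: controlling the infinite sums defining $\hat\sigma_n^2$ over a countable alphabet tightly enough to get superexponential rather than merely exponential concentration, which is precisely where the bounded-difference estimate for $D_n$ combined with the scaling $b_n^2=o(n)$ is essential.
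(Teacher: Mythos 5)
Your proposal is correct, and it reaches the key estimate by a genuinely more elementary route than the paper. The paper does not write out a separate proof of Corollary \ref{cor3}: it is obtained exactly as Corollary \ref{cor2}, whose superexponential concentration step $\frac{1}{b_n^2}\log\pp(|\hat\sigma_n^2-\sigma^2|>\varepsilon)\to-\infty$ rests on Lemma \ref{lemL}, which in turn recycles the machinery of Theorem \ref{thm1}: a three-term decomposition of $\sum_i\hat p_i f(\hat p_i)-\sum_i p_i f(p_i)$, the G\"artner--Ellis computation (\ref{thm-main1}) for the linear term $\sum_i(\hat p_i-p_i)f(p_i)$, and the Houdr\'e--Reynaud-Bouret exponential inequality for degenerate $U$-statistics (Lemma \ref{lem-hr}) to control $\sum_i(\hat p_i-p_i)^2$ after splitting off the diagonal. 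You use a two-term decomposition instead and replace both ingredients by elementary tools: Hoeffding for the bounded i.i.d.\ average $(\mathrm{II})$, and McDiarmid for $D_n=\sum_i(\hat p_i-p_i)^2$ — your bounded-difference claim is verifiable, since changing one observation shifts two coordinates of $\hat p$ by $1/n$ and alters $D_n$ by at most $4/n+2/n^2$, while $\ee D_n=\frac1n\sum_i p_i(1-p_i)\le 1/n\to0$; your algebraic bounds $|(\mathrm{I})|\le L_h(D_n+\sqrt{D_n})$ and $\sum_i p_i|\hat p_i-p_i|\le\sqrt{D_n}$ (Cauchy--Schwarz with weights $p_i$) are also correct. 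This simplification works precisely because in the corollary the deviation threshold is a fixed $\varepsilon>0$ and $b_n^2=o(n)$, so bounds of the form $e^{-cn\varepsilon^2}$ are automatically superexponential at speed $b_n^2$; what the elementary route could not do — and why the paper needs Lemma \ref{lem-hr} — is handle the remainder term inside Theorem \ref{thm1} itself, where the threshold shrinks like $\varepsilon b_n\sigma_n/\sqrt{n}$ and McDiarmid would only give $\exp(-c\varepsilon^2b_n^2\sigma_n^2)$, a finite (not $-\infty$) rate at speed $b_n^2$. Your final transfer step — the inclusion $\{|\sigma^2/\hat\sigma_n^2-1|>\delta\}\subseteq\{|\hat\sigma_n^2-\sigma^2|>\tfrac{\sigma^2}{2}\min(1,\delta)\}$ using $\sigma^2>0$, the split of $P_n$ over the good event with factors $\sqrt{1\pm\delta}$, and $\delta\downarrow0$ — coincides with the paper's argument in the proof of Corollary \ref{cor2}, so that part is the same approach. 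In short: sound proof, same architecture, but with the concentration input obtained self-containedly from Hoeffding/McDiarmid rather than from the $U$-statistics inequality and the moderate-deviation upper bound.
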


Next we consider the case that $g'$ is $\beta$-H\"older continuous.
\begin{thm}\label{thm2}
 Suppose the function $g:[0,1] \mapsto \rr$ is differentiable and its derivative $g'$ is $\beta$-H\"older continuous for some $\beta\in(2^{-1}, 1)$. Let
\beq\label{thm2-1}
\sigma^2_n=\sum_{i=1}^{\infty}p_{n,i}\left(g'\left( p_{n,i}\right)\right)^2-\left(\sum_{i=1}^{\infty}p_{n,i}g'\left( p_{n,i}\right)\right)^2.
\deq
Then for any $r > 0$, we have
\beq\label{thm2-2}
\lim_{n\rightarrow\infty}\frac{1}{b_n^2}\log\pp
\left(\frac{\sqrt{n}}{b_n\sigma_n}|\hat{\theta}_n-\theta_n|>r\right)= -\frac{r^2}{2}
\deq
where the moderate deviation scale $\{b_n, n\ge 1\}$ is a sequence of positive numbers satisfying
\beq\label{con11}
b_n\rightarrow\infty\ \ \text{and}\ \ \frac{b_n}{\sqrt{n}\sigma_n^{1/(2\beta-1)}}\to 0.
\deq
\end{thm}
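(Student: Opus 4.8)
The plan is to decompose the centred statistic $\hat\theta_n-\theta_n$ into a linear term plus a remainder and to treat the two pieces separately, mirroring the proof of Theorem \ref{thm1} but with the Lipschitz remainder estimate replaced by a H\"older one. Write $D_{n,k}=\hat p_{n,k}-p_{n,k}$. Since $g'$ is $\beta$-H\"older with constant $K$, a first-order Taylor expansion gives $g(\hat p_{n,k})-g(p_{n,k})=g'(p_{n,k})D_{n,k}+R_{n,k}$ with $|R_{n,k}|\le \frac{K}{1+\beta}|D_{n,k}|^{1+\beta}$, so that $\hat\theta_n-\theta_n=L_n+R_n$, where $L_n=\sum_k g'(p_{n,k})D_{n,k}$ and $R_n=\sum_k R_{n,k}$. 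Letting $K_i$ denote the index of $X_{i,n}$, the linear part rewrites as $L_n=\frac1n\sum_{i=1}^n Y_{i,n}$ with $Y_{i,n}=g'(p_{n,K_i})-\ee\, g'(p_{n,K_1})$; for each fixed $n$ these are i.i.d., centred, bounded (by the remark after Theorem \ref{thm1}, $g'$ is bounded), with $\var(Y_{1,n})=\sigma_n^2$. Hence $\frac{\sqrt n}{b_n\sigma_n}L_n=\frac{1}{b_n\sqrt n\,\sigma_n}\sum_{i=1}^n Y_{i,n}$ and a G\"artner--Ellis computation of $\frac1{b_n^2}\log\ee\exp\!\big(\lambda b_n\frac{\sqrt n}{\sigma_n}L_n\big)\to\lambda^2/2$ yields the MDP at speed $b_n^2$ with rate $x^2/2$. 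This step is identical to Theorem \ref{thm1} and only requires $b_n/(\sqrt n\sigma_n)\to0$, which (\ref{con11}) implies since $1/(2\beta-1)>1$ and $\sigma_n$ is bounded.

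The heart of the matter is to show that the remainder is exponentially negligible at speed $b_n^2$, i.e. $\lim_n \frac1{b_n^2}\log\pp\big(\frac{\sqrt n}{b_n\sigma_n}|R_n|>\delta\big)=-\infty$ for every $\delta>0$. Here the H\"older exponent enters through an interpolation inequality: using $\sum_k|D_{n,k}|\le2$ and H\"older with exponents $1/\beta,\,1/(1-\beta)$,
$$|R_n|\le\frac{K}{1+\beta}\sum_k(D_{n,k}^2)^{\beta}|D_{n,k}|^{1-\beta}\le\frac{K}{1+\beta}\,2^{1-\beta}\,V_n^{\beta},\qquad V_n:=\sum_k D_{n,k}^2.$$
Thus it suffices to show $\pp(V_n>v_n)$ is super-exponentially small at speed $b_n^2$ for a threshold $v_n$ of order $(b_n\sigma_n/\sqrt n)^{1/\beta}$; since $b_n\to\infty$ forces $\sigma_n\gg n^{-(2\beta-1)/2}$, one checks that $v_n$ dominates $\ee V_n\le1/n$, so this is a genuine upper-tail estimate.

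I would prove the $V_n$-bound by a truncated bounded-differences argument. On the good event $B_n=\{\max_k|D_{n,k}|\le\rho_n\}$, altering a single sample changes $V_n$ by at most $O(\rho_n/n)$, so McDiarmid's inequality (applied to a globally Lipschitz modification of $V_n$ that agrees with it on $B_n$) gives $\pp(V_n>v_n,\,B_n)\le\exp(-c\,nv_n^2/\rho_n^2)$; separately, Chernoff/Bernstein estimates for the binomial counts together with a union bound over letters give $\pp(B_n^c)\le\exp(-c'n\rho_n^2)$ as soon as $\rho_n\gg b_n/\sqrt n$. Both are $\exp(-\omega(b_n^2))$ exactly when one can choose $\rho_n$ with $b_n/\sqrt n\ll\rho_n\ll\sqrt n\,v_n/b_n$; this window is nonempty iff $b_n^2/n\ll v_n$, which, upon substituting $v_n\asymp(b_n\sigma_n/\sqrt n)^{1/\beta}$ and raising to the power $\beta$, reads $(b_n/\sqrt n)^{2\beta-1}\ll\sigma_n$, i.e. precisely condition (\ref{con11}). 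Combining the linear MDP with the exponential negligibility of $R_n$ through the standard exponential-equivalence lemma transfers the MDP to $\frac{\sqrt n}{b_n\sigma_n}(\hat\theta_n-\theta_n)$, and evaluating the rate on $\{|x|>r\}$ gives $\inf_{|x|>r}x^2/2=r^2/2$, which is (\ref{thm2-2}).

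The main obstacle is the remainder step, and within it two technical points. First, controlling $\max_k|D_{n,k}|$ over a countably infinite alphabet: the union bound must be arranged so that the infinitely many per-letter tails still sum to something super-exponentially small, which I would handle by splitting letters into ``heavy'' ($p_{n,k}\gtrsim\rho_n$), of which there are at most $O(\rho_n^{-1})$ by $\sum_k p_{n,k}=1$, and ``light'' ($p_{n,k}\lesssim\rho_n$), treated by a Poisson/Chernoff bound. Second, making the conditional bounded-differences estimate rigorous, i.e. replacing $V_n$ by a truncation with globally bounded differences so that McDiarmid applies unconditionally. The delicate calibration of $\rho_n$ between its lower bound $b_n/\sqrt n$ and upper bound $\sqrt n\,v_n/b_n$ is exactly what forces the scale restriction $b_n/(\sqrt n\sigma_n^{1/(2\beta-1)})\to0$, and getting this interplay right is the crux of the argument.
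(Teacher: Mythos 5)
Your proposal is correct and arrives at the theorem under exactly the stated scale condition, but it takes a genuinely different route at the decisive step. Both arguments share the same skeleton: the Taylor expansion with H\"older remainder (the paper's Lemma \ref{lemM}), the interpolation $\sum_k|\hat p_{n,k}-p_{n,k}|^{1+\beta}\le 2^{1-\beta}\bigl(\sum_k(\hat p_{n,k}-p_{n,k})^2\bigr)^{\beta}$ (the paper's (\ref{thm-r-2})), and the G\"artner--Ellis treatment of the linear term, so everything reduces to showing that $\pp(V_n>v_n)$ is $\exp(-\omega(b_n^2))$ for $V_n=\sum_k(\hat p_{n,k}-p_{n,k})^2$ and $v_n\asymp(b_n\sigma_n/\sqrt n)^{1/\beta}$. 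Here the paper expands $n^2V_n$ into a diagonal sum, which is deterministically at most $2n$ so that its tail probability is eventually exactly zero, plus a degenerate order-two $U$-statistic, to which it applies the Houdr\'e--Reynaud-Bouret exponential inequality (Lemma \ref{lem-hr}) with $u_n=b_n^2l_n$, verifying $\Delta_n=o\bigl((\sigma_nb_n)^{1/\beta}\sqrt{n}^{\,4-1/\beta}\bigr)$ under (\ref{con11}); see (\ref{thm-r1-2}). You instead propose a truncated bounded-differences argument: McDiarmid applied to a globally Lipschitz modification (e.g.\ $\tilde V_n=\sum_k\min\{(\hat p_{n,k}-p_{n,k})^2,\rho_n^2\}$, which has bounded differences $O(\rho_n/n)$ and equals $V_n$ on $\{\max_k|\hat p_{n,k}-p_{n,k}|\le\rho_n\}$), plus Chernoff control of the max; your calibration of the window $b_n/\sqrt n\ll\rho_n\ll\sqrt n\,v_n/b_n$ is correct and its nonemptiness is indeed equivalent to (\ref{con11}), since $v_n\gg b_n^2/n$ iff $\sigma_n\gg(b_n/\sqrt n)^{2\beta-1}$. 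Your route is more elementary and self-contained (no $U$-statistics machinery), at the price of the truncation construction and the infinite-alphabet union bound, both of which you correctly flag. One concrete caution on the latter: bounding the heavy letters by ``count $O(\rho_n^{-1})$ times worst-case tail $e^{-cn\rho_n^2}$'' leaves a prefactor $\rho_n^{-1}$, and since the theorem only assumes $b_n\to\infty$, possibly with $b_n^2=o(\log n)$, the term $b_n^{-2}\log(1/\rho_n)$ need not vanish; this is repaired by summing the per-letter Bernstein tails stratified dyadically in $p_{n,k}$ (letters with $p_{n,k}\asymp 2^{-j}$ number at most $2^{j+1}$ and carry exponent $\gtrsim n\rho_n^2\,2^{j}$), which removes the prefactor and yields $\pp(\max_k|\hat p_{n,k}-p_{n,k}|>\rho_n)\le Ce^{-c'n\rho_n^2}$, while the light letters sum as you indicate. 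With that adjustment your proof closes; what the paper's approach buys is that the ready-made inequality of Lemma \ref{lem-hr} absorbs all of this bookkeeping at once, whereas your approach buys transparency about exactly where the exponent $1/(2\beta-1)$ in (\ref{con11}) comes from.
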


\begin{rem}
It is worth noting that when $\beta=1$, the theorem is precisely Theorem \ref{thm1}. We discuss the Lipschitz continuous case and $\beta$-H\"older continuous case separately, because the proof of the $\beta$-H\"older continuous case is relies on the Lipschitz continuous.
\end{rem}

\begin{rem} Grabchak and Zhang \cite{G-Z} studied the asymptotic normality of $\hat\theta_n$ for the $\beta$-H\"older continuous case with $\beta\in(0,1]$. For the moderate deviation principle of $\hat\theta_n$, we only discuss the case $\beta\in(2^{-1},1]$. At present, it is still impossible to prove the case $\beta\in(0,2^{-1}]$.
\end{rem}

\begin{cor}\label{cor2b}
Under the assumptions of Theorem \ref{thm2}, let
\beq\label{cor2b-1}
\hat\sigma^2_n=\sum_{i=1}^{\infty}\hat p_{n,i}\left(g'\left( \hat p_{n,i}\right)\right)^2-\left(\sum_{i=1}^{\infty}\hat p_{n,i}g'\left( \hat p_{n,i}\right)\right)^2.
\deq
If $\liminf_{n\to\infty} \sigma_n^2>0$, then for any $r > 0$, we have
\beq\label{cor2b-2}
\lim_{n\rightarrow\infty}\frac{1}{b_n^2}\log\pp
\left(\frac{\sqrt{n}}{b_n\hat \sigma_n}|\hat{\theta}_n-\theta_n|>r\right)= -\frac{r^2}{2}.
\deq
\end{cor}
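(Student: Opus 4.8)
The plan is to deduce Corollary~\ref{cor2b} from Theorem~\ref{thm2} by an exponential-equivalence argument at the moderate deviation scale, the only substantial new ingredient being a super-exponential control of the random normalisation. I would first reduce the statement to the single claim that, for every $\delta\in(0,1)$,
\begin{equation*}
\limsup_{n\to\infty}\frac{1}{b_n^2}\log\pp\left(\left|\frac{\hat\sigma_n}{\sigma_n}-1\right|>\delta\right)=-\infty.\tag{$\ast$}
\end{equation*}
Granting $(\ast)$, write $Z_n=\frac{\sqrt n}{b_n\sigma_n}|\hat\theta_n-\theta_n|$, so that the event in \eqref{cor2b-2} is $\{Z_n\,\sigma_n/\hat\sigma_n>r\}$. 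On $\{|\hat\sigma_n/\sigma_n-1|\le\delta\}$ one has $(1+\delta)^{-1}Z_n\le Z_n\sigma_n/\hat\sigma_n\le(1-\delta)^{-1}Z_n$, whence
\begin{equation*}
\pp\big(Z_n>r(1+\delta)\big)-\pp\Big(\big|\tfrac{\hat\sigma_n}{\sigma_n}-1\big|>\delta\Big)\ \le\ \pp\Big(\tfrac{\sqrt n}{b_n\hat\sigma_n}|\hat\theta_n-\theta_n|>r\Big)\ \le\ \pp\big(Z_n>r(1-\delta)\big)+\pp\Big(\big|\tfrac{\hat\sigma_n}{\sigma_n}-1\big|>\delta\Big).
\end{equation*}
Applying Theorem~\ref{thm2} to the two $Z_n$-probabilities and $(\ast)$ to the two error terms, and using the elementary fact that $b_n^{-2}\log u_n\to\alpha\in\rr$ together with $b_n^{-2}\log v_n\to-\infty$ implies $b_n^{-2}\log(u_n\pm v_n)\to\alpha$, I obtain that $b_n^{-2}\log\pp(\cdot)$ is squeezed between $-r^2(1+\delta)^2/2$ and $-r^2(1-\delta)^2/2$; letting $\delta\downarrow0$ yields \eqref{cor2b-2}.

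It remains to prove $(\ast)$. Since $\liminf_n\sigma_n^2>0$ and (by the Remark after Theorem~\ref{thm1}) $\sigma_n^2\le M$, there are constants $0<c\le M<\infty$ with $\sigma_n^2\in[c,M]$ for all large $n$, while $\hat\sigma_n^2\ge0$ being a variance; hence it suffices to establish the difference version
\begin{equation*}
\limsup_{n\to\infty}\frac{1}{b_n^2}\log\pp\big(|\hat\sigma_n^2-\sigma_n^2|>\epsilon\big)=-\infty\qquad\text{for every }\epsilon>0.\tag{$\ast\ast$}
\end{equation*}
I would write $\hat\sigma_n^2-\sigma_n^2=(A_n-a_n)-(B_n^2-b_n^2)$, where $A_n=\sum_i\hat p_{n,i}\,g'(\hat p_{n,i})^2$, $B_n=\sum_i\hat p_{n,i}\,g'(\hat p_{n,i})$, and $a_n,b_n$ are the same expressions with $\hat p_{n,i}$ replaced by $p_{n,i}$. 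Because $g'$ is bounded, say $|g'|\le L$, one has $|B_n|,|b_n|\le L$, so $|B_n^2-b_n^2|\le 2L|B_n-b_n|$ and the problem reduces to the super-exponential smallness of $A_n-a_n$ and of $B_n-b_n$.

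For each of these I would interpose the linearised quantities $\tilde A_n=\sum_i\hat p_{n,i}\,g'(p_{n,i})^2$ and $\tilde B_n=\sum_i\hat p_{n,i}\,g'(p_{n,i})$. Each is an empirical mean $\frac1n\sum_{j=1}^n f(X_{j,n})$ of i.i.d. random variables bounded by $L^2$ and $L$ respectively, with means exactly $a_n$ and $b_n$; hence Hoeffding's inequality gives bounds of the form $2e^{-n\epsilon^2/C}$, which are super-exponentially small at speed $b_n^2$ because condition \eqref{con11} together with $\liminf_n\sigma_n^2>0$ forces $b_n=o(\sqrt n)$, i.e. $b_n^2=o(n)$. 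The remainders $A_n-\tilde A_n$ and $B_n-\tilde B_n$ are controlled by the $\beta$-H\"older continuity of $g'$: since $|g'(x)^2-g'(y)^2|\le 2LK|x-y|^\beta$ and $|g'(x)-g'(y)|\le K|x-y|^\beta$, both are bounded by a constant multiple of $S_n:=\sum_i\hat p_{n,i}\,|\hat p_{n,i}-p_{n,i}|^\beta$, and a case split according to whether $\hat p_{n,i}\le 2p_{n,i}$ or $\hat p_{n,i}>2p_{n,i}$ gives
\begin{equation*}
S_n\le 2\sum_i|\hat p_{n,i}-p_{n,i}|^{1+\beta}+2\sum_i p_{n,i}\,|\hat p_{n,i}-p_{n,i}|^\beta.
\end{equation*}

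The main obstacle is the super-exponential control of these two H\"older-type sums at speed $b_n^2$, and this is precisely where the restriction $\beta\in(2^{-1},1)$ and the scale condition \eqref{con11} enter. The first sum is nothing but the second-order Taylor remainder already estimated in the proof of Theorem~\ref{thm2}, where it is shown to be super-exponentially small even below the shrinking threshold $\epsilon b_n\sigma_n/\sqrt n$, so a fortiori $\pp(\sum_i|\hat p_{n,i}-p_{n,i}|^{1+\beta}>\epsilon)$ is super-exponentially small at speed $b_n^2$. For the second sum, concavity of $t\mapsto t^\beta$ gives $\sum_i p_{n,i}|\hat p_{n,i}-p_{n,i}|^\beta\le\big(\sum_i p_{n,i}|\hat p_{n,i}-p_{n,i}|\big)^\beta$, and the inner $\ell^1$-type deviation is handled by the very concentration estimates developed for Theorem~\ref{thm2}. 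Combining the Hoeffding bounds with these remainder estimates yields $(\ast\ast)$, hence $(\ast)$, and completes the proof; I expect the remainder control to be the delicate step, while the reduction and the linearisation are routine.
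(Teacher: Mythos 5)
Your proposal is correct and follows essentially the same route as the paper: your sandwich reduction to the super-exponential smallness of $\pp\left(|\hat\sigma_n^2-\sigma_n^2|>\epsilon\right)$ is exactly the argument used for Corollary \ref{cor2}, and your treatment of $\hat\sigma_n^2-\sigma_n^2$ — boundedness of $g'$, $\beta$-H\"older continuity of $(g')^2$, and reduction of the two sums $\sum_i|\hat p_{n,i}-p_{n,i}|^{1+\beta}$ and $\sum_i p_{n,i}|\hat p_{n,i}-p_{n,i}|^{\beta}$ to the quadratic concentration estimates from the proof of Theorem \ref{thm2} — is precisely the content of the paper's Lemma \ref{lemb}. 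The only harmless local variations are that you handle the linearised terms by Hoeffding's inequality (using $b_n^2=o(n)$) where the paper reuses the exponential-moment computation behind (\ref{thm-main1}), and you use Jensen's inequality for the weighted H\"older sum where the paper uses H\"older's inequality with exponents $2/(2-\beta)$ and $2/\beta$; both reduce it to a power of $\sum_i(\hat p_{n,i}-p_{n,i})^2$.
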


\begin{thm}\label{cor1b}
Suppose the function $g$ is differentiable on $[0,1]$ and its derivative $g'$ is $\beta$-H\"older continuous for some $\beta\in(2^{-1}, 1)$. Let
\beq\label{cor1b-1}
\sigma^2=\sum_{i=1}^{\infty}p_{i}\left(g'\left( p_{i}\right)\right)^2-\left(\sum_{i=1}^{\infty}p_{i}g'\left( p_{i}\right)\right)^2.
\deq\label{cor1b-2}
Let $\hat{\theta}_n$ be defined in (\ref{theta2}). Then for any $r > 0$, we have
\beq\label{cor1b-2}
\lim_{n\rightarrow\infty}\frac{1}{b_n^2}\log\pp
\left(\frac{\sqrt{n}}{b_n\sigma}|\hat{\theta}_n-\theta|>r\right)= -\frac{r^2}{2}
\deq
where the moderate deviation scale $\{b_n, n\ge 1\}$ is a sequence of positive numbers satisfying
$$
b_n\rightarrow\infty\ \ \text{and}\ \ \frac{b_n}{\sqrt{n}}\to 0.
$$
\end{thm}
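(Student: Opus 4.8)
The plan is to obtain Theorem \ref{cor1b} as a specialization of Theorem \ref{thm2} to the case of a fixed, sample-size-independent distribution, mirroring how the Lipschitz fixed-distribution statement (Theorem \ref{cor1}) stands to Theorem \ref{thm1}. In the framework of Theorem \ref{thm2} one is free to choose the triangular array, so I would take $p_{n,i}=p_i$ for every $n$ and $i$; the single i.i.d.\ sequence $\{X_n\}$ is then realized as the array $\{X_{k,n}\colon 1\le k\le n\}$ by using, for each $n$, the first $n$ observations. Since only the law of $\hat\theta_n$ enters the probability in (\ref{cor1b-2}), and this law coincides with that of the plug-in estimator built from $\mathbf P_n=\mathbf P$, the two moderate-deviation statements concern the same quantity for each $n$, and hence have the same limit.

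First I would record that, under this specialization, $\theta_n=\theta$ and $\sigma_n^2=\sigma^2$ for every $n$, so the variance sequence is constant. Consequently the scale condition (\ref{con11}) required by Theorem \ref{thm2}, namely $b_n/\big(\sqrt{n}\,\sigma_n^{1/(2\beta-1)}\big)\to 0$, reduces to $b_n/\big(\sqrt{n}\,\sigma^{1/(2\beta-1)}\big)\to 0$. Because $\beta\in(2^{-1},1)$ forces $1/(2\beta-1)\in(0,\infty)$ and $\sigma>0$ (implicit in dividing by $\sigma$, and exactly the content of $\liminf_n\sigma_n^2>0$ in Corollary \ref{cor2b} applied here), the factor $\sigma^{1/(2\beta-1)}$ is a fixed positive constant; hence this condition is equivalent to the hypothesis $b_n/\sqrt{n}\to 0$ of Theorem \ref{cor1b}. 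Feeding these observations into Theorem \ref{thm2} yields (\ref{cor1b-2}) at once.

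The only points requiring care are bookkeeping rather than genuine obstacles: one must confirm that the constant-variance case is admissible in Theorem \ref{thm2} (it is, since $\liminf_n\sigma_n^2=\sigma^2>0$) and that the two scale conditions are equivalent, which follows from $\sigma^{1/(2\beta-1)}$ being a positive constant. There is no new analytic difficulty, because the heart of the argument has already been carried out in the proof of Theorem \ref{thm2}: one expands $\hat\theta_n-\theta=\sum_k g'(p_k)(\hat p_k-p_k)+\sum_k R_k$, establishes the moderate deviation principle for the linear part $\sum_k g'(p_k)(\hat p_k-p_k)$, which is a normalized sum $\frac1n\sum_{i=1}^n\xi_i$ of bounded, centered, i.i.d.\ terms $\xi_i$ of variance $\sigma^2$, via the classical MDP for such sums, and shows that the H\"older remainder $\sum_k R_k$, with $|R_k|\le C|\hat p_k-p_k|^{1+\beta}$, is negligible at the scale $\sqrt{n}/b_n$ precisely because $\beta>2^{-1}$. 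Thus the main step I would actually write down is the verification that the hypotheses of Theorem \ref{thm2} are met in the present setting, after which the conclusion is immediate.
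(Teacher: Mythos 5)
Your proposal is correct and takes essentially the same route as the paper: the paper's proof of Theorem \ref{cor1b} is precisely the specialization you describe (it reads ``similar as Theorem \ref{cor1}'', which in turn is the constant-array case $p_{n,i}\equiv p_i$ of the triangular-array theorem), with $\sigma_n\equiv\sigma$ constant so that the scale condition (\ref{con11}) of Theorem \ref{thm2} reduces to $b_n/\sqrt{n}\to 0$. If anything, your explicit observation that $\sigma>0$ must be assumed (it is implicit in dividing by $\sigma$) is more careful than the paper's passing claim that nonuniformity ``obviously'' yields $\sigma^2>0$, which fails, e.g., when $g'$ is constant on the support.
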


\begin{cor}\label{cor3b}
Under the assumptions of Theorem \ref{cor1b}, let
\beq\label{cor3b-1}
\hat\sigma^2_n=\sum_{i=1}^{\infty}\hat p_{i}\left(g'\left( \hat p_{i}\right)\right)^2-\left(\sum_{i=1}^{\infty}\hat p_{i}g'\left( \hat p_{i}\right)\right)^2.
\deq
If $\sigma^2>0$, then for any $r > 0$, we have
\beq\label{cor3b-2}
\lim_{n\rightarrow\infty}\frac{1}{b_n^2}\log\pp
\left(\frac{\sqrt{n}}{b_n\hat \sigma_n}|\hat{\theta}_n-\theta_n|>r\right)= -\frac{r^2}{2}.
\deq
\end{cor}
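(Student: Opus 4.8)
The plan is to deduce the result from Theorem \ref{cor1b} by showing that the random normalisation $\hat\sigma_n$ may be replaced by the deterministic $\sigma$ without affecting the moderate deviation asymptotics. Writing $f:=g'$ and $R_n:=\frac{\sqrt n}{b_n\sigma}|\hat\theta_n-\theta|$, the statistic appearing in (\ref{cor3b-2}) equals $\frac{\sigma}{\hat\sigma_n}R_n$, and Theorem \ref{cor1b} already supplies $\lim_n b_n^{-2}\log\pp(R_n>r)=-r^2/2$ for every $r>0$. The whole argument therefore reduces to the following super-exponential consistency estimate, which I will prove last: for every $\delta>0$,
\beq
\lim_{n\to\infty}\frac{1}{b_n^2}\log\pp\left(|\hat\sigma_n^2-\sigma^2|>\delta\right)=-\infty.
\deq

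Granting this estimate, I would run a standard sandwich argument. Fix $\epsilon\in(0,\sigma^2)$ and work on the event $E_n:=\{|\hat\sigma_n^2-\sigma^2|\le\epsilon\}$, on which $(1+\epsilon/\sigma^2)^{-1/2}\le\sigma/\hat\sigma_n\le(1-\epsilon/\sigma^2)^{-1/2}$. For the upper bound one has $\pp(\frac{\sigma}{\hat\sigma_n}R_n>r)\le\pp(R_n>r\sqrt{1-\epsilon/\sigma^2})+\pp(E_n^c)$; dividing the logarithm by $b_n^2$ and invoking the principle of the largest term together with Theorem \ref{cor1b} and the key estimate gives $\limsup_n b_n^{-2}\log\pp(\frac{\sigma}{\hat\sigma_n}R_n>r)\le-\frac{r^2}{2}(1-\epsilon/\sigma^2)$. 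For the lower bound, the inclusion $\{R_n>r\sqrt{1+\epsilon/\sigma^2}\}\cap E_n\subseteq\{\frac{\sigma}{\hat\sigma_n}R_n>r\}$ yields $\pp(\frac{\sigma}{\hat\sigma_n}R_n>r)\ge\pp(R_n>r\sqrt{1+\epsilon/\sigma^2})-\pp(E_n^c)$; since the first probability decays only at the rate $e^{-cb_n^2}$ while $\pp(E_n^c)$ is super-exponentially smaller, the principle of the largest term gives $\liminf_n b_n^{-2}\log\pp(\frac{\sigma}{\hat\sigma_n}R_n>r)\ge-\frac{r^2}{2}(1+\epsilon/\sigma^2)$. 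Letting $\epsilon\downarrow0$ in both bounds closes the argument.

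To prove the key estimate I would set $T_n:=\sum_i\hat p_i f(\hat p_i)^2$, $U_n:=\sum_i\hat p_i f(\hat p_i)$ and their deterministic counterparts $T,U$, so that $\hat\sigma_n^2-\sigma^2=(T_n-T)-(U_n-U)(U_n+U)$, where $|U_n+U|\le 2M$ because $f$ is bounded by some $M$ on $[0,1]$; it thus suffices to show that $T_n\to T$ and $U_n\to U$ super-exponentially. Both have the same structure, so I treat $U_n-U$, splitting it as $\sum_i\hat p_i[f(\hat p_i)-f(p_i)]+\sum_i f(p_i)(\hat p_i-p_i)$. The second, linear term equals $\frac1n\sum_{j=1}^n(f(p_{k_j})-U)$, where $k_j$ is the index with $X_j=a_{k_j}$; this is a centred average of i.i.d.\ variables bounded by $2M$, so Hoeffding's inequality gives $\pp(|\cdot|>\eta)\le 2e^{-cn\eta^2}$, and since $b_n/\sqrt n\to0$ forces $n/b_n^2\to\infty$, its contribution is super-exponentially negligible.

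The main obstacle is the first, nonlinear term, which the $\beta$-H\"older bound controls by $K\sum_i\hat p_i|\hat p_i-p_i|^\beta$; the difficulty is that this is an infinite sum whose coordinatewise supremum does not concentrate. I would handle it by truncation: given $\eta>0$, choose $N$ with $\sum_{i>N}p_i<\eta$ and split the sum at $N$. On the finite block $i\le N$, Hoeffding together with a union bound makes $\max_{i\le N}|\hat p_i-p_i|$ small with super-exponentially high probability, so that block is at most $\delta^\beta$. For the tail block, the elementary inequality $(a+b)^\beta\le a^\beta+b^\beta$ and $\sum_i x_i^{1+\beta}\le(\sum_i x_i)^{1+\beta}$ give $\sum_{i>N}\hat p_i|\hat p_i-p_i|^\beta\le(\sum_{i>N}\hat p_i)^{1+\beta}+\eta^\beta\sum_{i>N}\hat p_i$, and the empirical tail mass $\sum_{i>N}\hat p_i=\frac1n\sum_j I_{\{X_j\in B_N\}}$, with $B_N:=\{a_i:i>N\}$, is a bounded i.i.d.\ average concentrating around $\sum_{i>N}p_i<\eta$, again controlled super-exponentially by Hoeffding. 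Choosing $\delta$ and $\eta$ small makes the whole nonlinear term smaller than any prescribed level off a super-exponentially negligible event. The identical computation with $f$ replaced by $f^2$, which is still $\beta$-H\"older since $f$ is bounded, controls $T_n-T$, completing the key estimate and hence the corollary.
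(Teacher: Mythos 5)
Your proposal is correct, and its outer layer — the sandwich reduction replacing $\hat\sigma_n$ by $\sigma$ via the super-exponential consistency of $\hat\sigma_n^2$, with the principle of the largest term and $\epsilon\downarrow 0$ — is exactly the paper's argument: it is the proof of Corollary \ref{cor2}, which the paper then reuses verbatim for Corollaries \ref{cor2b} and \ref{cor3b}. Where you genuinely diverge is in the proof of the key estimate $\frac{1}{b_n^2}\log\pp\left(|\hat\sigma_n^2-\sigma^2|>\delta\right)\to-\infty$. The paper (Lemma \ref{lemb}) uses the three-term decomposition $\sum_i(\hat p_i-p_i)f(p_i)+\sum_i(\hat p_i-p_i)(f(\hat p_i)-f(p_i))+\sum_i p_i(f(\hat p_i)-f(p_i))$ and recycles the machinery already built for the MDP itself: the exponential-moment/G\"artner--Ellis computation for the linear term, and the Houdr\'e--Reynaud-Bouret $U$-statistic inequality (Lemma \ref{lem-hr}) to control $\sum_i(\hat p_i-p_i)^2$, hence $\left(\sum_i(\hat p_i-p_i)^2\right)^\beta$, for the two nonlinear terms. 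You instead exploit that only a fixed-$\varepsilon$ super-exponential bound is needed and that $b_n=o(\sqrt n)$ forces $n/b_n^2\to\infty$, so elementary Hoeffding bounds suffice: Hoeffding for the linear term (written as a centred bounded i.i.d.\ average), and Hoeffding plus truncation at a level $N$ with $\sum_{i>N}p_i<\eta$ — combined with $\beta$-subadditivity and $\sum_i x_i^{1+\beta}\le\left(\sum_i x_i\right)^{1+\beta}$ — for the nonlinear term $\sum_i\hat p_i|\hat p_i-p_i|^\beta$; your two-term splitting $\hat p_i(f(\hat p_i)-f(p_i))+f(p_i)(\hat p_i-p_i)$ simply merges the paper's second and third terms, and all your individual inequalities check out, including the $\beta$-H\"older property of $f^2=(g')^2$ used for $T_n-T$. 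Your route is more elementary and self-contained, avoiding the $U$-statistic inequality entirely; what the paper's route buys is uniformity in the underlying distribution: its quadratic bound needs no tightness of $\{\mathbf{P}_n\}$ and therefore also serves the triangular-array Corollaries \ref{cor2} and \ref{cor2b}, where the distribution changes with $n$ and $\sigma_n$ may tend to $0$, whereas your truncation step requires a single fixed $\mathbf{P}$ with summable tail — which is precisely the setting of Corollary \ref{cor3b}, so no gap arises for the statement at hand.
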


\section{Some examples}
Consider the index
\beq\label{h}
h_{\alpha,\gamma}=\sum_{i=1}^\infty p_i^\alpha\left(1-p_i\right)^\gamma
\deq
for $\alpha > 0$ and $\gamma \geq 0$. When $\alpha = 2$ and $\gamma =0$, this is Simpson's index introduced in Simpson \cite{Simpson}. When $\alpha$ and $\gamma $ are integers, this corresponds to the generalised Simpson's indices introduced in Zhang and Zhou \cite{ZhZh10} and further studied in Grabchak et al. \cite{G}.
When $\alpha> 0$ and $\gamma  = 0$, this corresponds to Re\'{n}yi equivalent entropy introduced in Zhang and Grabchak \cite{Z-G}.

Note that for $h_{\alpha,\gamma}$, where $\alpha > 0$ and $\gamma \geq 0$, we have
$g(x)=x^\alpha(1-x)^\gamma$ and
$$
g'(x)=\alpha x^{\alpha-1}(1-x)^\gamma-\gamma x^\alpha(1-x)^{\gamma-1}.
$$
Furthermore, we recall the following properties.
\begin{prop}\label{prop1} \cite[Proposition 3.1]{G-Z} When $\alpha\ge 1$ and $\gamma\in\{0\}\cup [1,\infty)$, $g'$ is $\beta$-H\"older continuous with
$$
\beta=\begin{cases} \min\{\alpha-1, \gamma-1, 1\}\ \ & \text{if}\ \ \alpha, \gamma >1\\
\min\{\alpha-1,  1\}\ \ & \text{if}\ \ \alpha>1, \gamma\in\{0,1\}\\
\min\{\gamma-1, 1\}\ \ & \text{if}\ \ \alpha=1, \gamma >1\\
1\ \ & \text{if}\ \ \alpha=1,  \gamma\in\{0,1\}\\
\end{cases}.
$$
\end{prop}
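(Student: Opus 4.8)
The plan is to reduce the H\"older estimate on $g'$ to a pointwise growth bound on $g''$ and then read off the exponent from the behaviour of $g''$ at the two endpoints $0$ and $1$. The underlying elementary fact I would use is: if $f$ is continuous on $[0,1]$, continuously differentiable on $(0,1)$, and $|f'(t)|\le C\,t^{\beta-1}$ for $t$ near $0$ (respectively $|f'(t)|\le C\,(1-t)^{\beta-1}$ for $t$ near $1$) with $\beta\in(0,1]$, then $f$ is $\beta$-H\"older on the corresponding half of $[0,1]$. This follows by writing $f(y)-f(x)=\int_x^y f'(t)\,dt$, bounding the integral by $\frac{C}{\beta}|y^\beta-x^\beta|$, and invoking the subadditivity inequality $|y^\beta-x^\beta|\le|y-x|^\beta$, valid for $\beta\in(0,1]$. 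Applying this with $f=g'$, the task becomes one of bounding $g''$.

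Next I would compute the second derivative,
\beq
g''(x)=\alpha(\alpha-1)x^{\alpha-2}(1-x)^{\gamma}-2\alpha\gamma\,x^{\alpha-1}(1-x)^{\gamma-1}+\gamma(\gamma-1)x^{\alpha}(1-x)^{\gamma-2},
\deq
and analyse it separately near each endpoint. Near $x=0$ every factor $(1-x)^{\gamma}$, $(1-x)^{\gamma-1}$, $(1-x)^{\gamma-2}$ stays bounded, so $g''$ is controlled by the lowest power of $x$ with a nonzero coefficient; this is $x^{\alpha-2}$ when $\alpha>1$ (coefficient $\alpha(\alpha-1)\ne 0$) and bounded when $\alpha=1$ (the leading coefficient vanishes). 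Hence $|g''(x)|\le C\,x^{\alpha-2}=C\,x^{(\alpha-1)-1}$ near $0$, which by the lemma yields the local exponent $\min\{\alpha-1,1\}$ --- the truncation at $1$ covering the range $\alpha\ge 2$, where $g''$ is simply bounded and $g'$ is Lipschitz. By the symmetric analysis near $x=1$ (all powers of $x$ bounded, the worst term $(1-x)^{\gamma-2}$), the local exponent there is $\min\{\gamma-1,1\}$ when $\gamma>1$, and equal to $1$ when $\gamma\in\{0,1\}$ (for $\gamma=1$ the coefficient $\gamma(\gamma-1)$ vanishes, and for $\gamma=0$ the factor $(1-x)$ disappears entirely, so $g(x)=x^\alpha$).

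Finally I would glue the two half-interval estimates. If $g'$ is $\beta_0$-H\"older on $[0,1/2]$ and $\beta_1$-H\"older on $[1/2,1]$ with $\beta_0\le\beta_1$, then for $x\le 1/2\le y$ the triangle inequality through the midpoint together with $|x-1/2|,|1/2-y|\le|x-y|\le 1$ and $|x-y|^{\beta_1}\le|x-y|^{\beta_0}$ gives global $\min\{\beta_0,\beta_1\}$-H\"older continuity; in the interior $g''$ is bounded, so no further contribution appears. Taking the minimum of the two endpoint exponents reproduces the four cases exactly: $\min\{\alpha-1,\gamma-1,1\}$ when $\alpha,\gamma>1$; $\min\{\alpha-1,1\}$ when $\alpha>1,\gamma\in\{0,1\}$; $\min\{\gamma-1,1\}$ when $\alpha=1,\gamma>1$; and $1$ when $\alpha=1,\gamma\in\{0,1\}$. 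The main obstacle is not any single estimate but the bookkeeping: one must track which of the three terms of $g''$ dominates at each endpoint, treat the borderline values $\alpha=2$ and $\gamma=2$ (where blow-up degenerates into boundedness), and confirm that the vanishing coefficients at $\alpha=1$ and $\gamma=1$ are precisely what makes $g'$ Lipschitz rather than merely H\"older there. I would also check at the outset that the hypotheses $\alpha\ge 1$ and $\gamma\in\{0\}\cup[1,\infty)$ guarantee that $g'$ is continuous up to the endpoints, which is what licenses using the fundamental theorem of calculus across the closed interval.
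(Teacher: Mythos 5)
Your proposal is correct, but there is nothing in the paper to compare it against: the paper does not prove this proposition, it imports it verbatim from Grabchak and Zhang \cite[Proposition 3.1]{G-Z} and uses it as a black box in the examples of Section 3. Taken on its own terms, your argument is a sound and self-contained proof. The reduction of $\beta$-H\"older continuity of $g'$ to the endpoint growth bound $|g''(t)|\le C\,t^{\beta-1}$ (resp.\ $C\,(1-t)^{\beta-1}$), via $g'(y)-g'(x)=\int_x^y g''(t)\,dt$, the estimate $\frac{C}{\beta}|y^\beta-x^\beta|$, and the subadditivity $|y^\beta-x^\beta|\le|y-x|^\beta$ for $\beta\in(0,1]$, is valid; your formula for $g''$ is correct; and your case bookkeeping matches the four branches, including the degenerate coefficients $\alpha(\alpha-1)=0$ at $\alpha=1$ and $\gamma(\gamma-1)=0$ at $\gamma=1$, the borderline boundedness at $\alpha\ge2$ and $\gamma\ge2$, and the gluing of the two half-interval exponents through the midpoint (legitimate since $|x-y|\le1$ forces $|x-y|^{\beta_1}\le|x-y|^{\beta_0}$ when $\beta_0\le\beta_1$). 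Two points deserve to be made explicit in a polished write-up, though you use both implicitly: the bound $|g''(t)|\le C\,t^{\min\{\alpha-1,1\}-1}$ must hold uniformly on all of $(0,1/2]$, not merely ``near $0$'' (harmless, since $g''$ is continuous on the interior and the comparison function is bounded below away from $0$, so one constant works), and the fundamental theorem of calculus on intervals touching an endpoint requires the improper integrability of $g''$, which is exactly what $\beta>0$ guarantees. It is also worth noting that the standing hypothesis $\gamma\in\{0\}\cup[1,\infty)$ is what keeps the factor $(1-x)^{\gamma-1}$ in $g'$ itself from blowing up at $x=1$; for $\gamma\in(0,1)$ the statement would have to change, and your proof correctly never needs that range.
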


\begin{exa}
Consider a sequence of distributions of the form
$$p_{n,1}=\frac{1}{2}+\frac{1}{2n^\gamma}, \ \ \ p_{n,2}=\frac{1}{2}-\frac{1}{2n^\gamma},$$
where $\gamma \in (0,1/2)$ is a real number and $p_{n,i}=0$ for all $i=3,4,\cdots$. Clearly, this approaches a uniform distribution as $n\to \infty$. Suppose that we want to estimate
 Simpson's diversity index, which corresponds to $g(x)=x^2$. In this case, $g'(x)=2x$ is Lipschitz continuous we have
$$
\aligned
\sigma_n^2=&\frac{1}{2}(1+\frac{1}{n^\gamma})^3+\frac{1}{2}(1-\frac{1}{n^\gamma})^3
-\frac{1}{4}\left((1+\frac{1}{n^\gamma})^2+(1-\frac{1}{n^\gamma})^2\right)^2
\\=&\frac{1}{n^{2\gamma}}-\frac{1}{n^{4\gamma}}\sim \frac{1}{n^{2\gamma}}.
\endaligned
$$
If we take $b_n=o(n^{1/2-\gamma})$, then the moderate deviation principle in Theorem \ref{thm1} holds.
\end{exa}
\begin{exa}
For every $i=1,2, \cdots$, let $p_{n,i}=(1-p_n)^{i-1}p_n$ where $p_n=1-\frac{1}{n^\alpha}$ and $\alpha \in (0,1)$. For the case $g(x)=x^2$, we have
$$
\aligned
\sigma_n^2=&4\left(1-\frac{1}{n^\alpha}\right)^3\left(\frac{1}{1-\frac{1}{n^{3\alpha}}}\right)
-4\left(1-\frac{1}{n^\alpha}\right)^4\left(\frac{1}{1-\frac{1}{n^{2\alpha}}}\right)^2\\
=&\frac{4n^\alpha(n^\alpha-1)^2}{(n^{2\alpha}+n^\alpha+1)(n^\alpha+1)^2}
\sim\frac{4}{n^\alpha}.
\endaligned
$$
If we take $b_n=o(n^{1/2-\alpha/2})$, then the moderate deviation principle in Theorem \ref{thm1} holds.
\end{exa}
\begin{exa}
For every $i=1,2, \cdots$, let $p_{i}=C_z i^{-2}$ where
$$
C_{z}=\frac{1}{\sum_{i=1}^\infty i^{-2}}=\frac{1}{\zeta(2)},
$$
$\zeta(s)=\sum_{k=1}^\infty k^{-s}$ is the Riemann zeta function and $\zeta(2)=\frac{\pi^2}{6}$, $\zeta(4)=\frac{\pi^4}{90}$, $\zeta(6)=\frac{\pi^6}{945}$.  For the case $g(x)=x^2$, we have
$$
\aligned
\sigma^2=4C_{z}^3\sum_{i=1}^{\infty}\frac{1}{i^6}-4\left(C_{z}^2\sum_{i=1}^{\infty}\frac{1}{i^4}\right)^2
=4\left(\frac{\zeta(6)}{\zeta(2)^3}-\frac{\zeta(4)^2}{\zeta(2)^4}\right)=\frac{48}{175}.
\endaligned
$$
If we take $b_n=o(n^{1/2})$, then the moderate deviation principles in Theorem \ref{thm1} and Theorem \ref{cor1} hold.
\end{exa}

\begin{exa} Let the index $h_{\alpha,0}$ be defined in (\ref{h}). For $\alpha>1$, consider Tsallis entropy
$$
\mathcal{T}_\alpha:=\frac{1}{1-\alpha} \left(h_{\alpha,0}-1\right)=\frac{1}{1-\alpha}\left(\sum_{k=1}^\infty p_k^\alpha-1\right)
$$
and its plug-in estimator
$$
\hat{\mathcal{T}}_\alpha:=\frac{1}{1-\alpha} \left(\hat h_{\alpha,0}-1\right)=\frac{1}{1-\alpha}\left(\sum_{k=1}^\infty \hat p_k^\alpha-1\right).
$$
Hence we get
\beq\label{remt1}
\hat{\mathcal{T}}_{\alpha,n}-\mathcal{T}_\alpha=\frac{1}{1-\alpha}(\hat h_{\alpha,0}-h_{\alpha,0}).
\deq
Let $g(x)=(1-\alpha)^{-1}x^{\alpha}$ and
$$
\hat\theta_n-\theta:=\sum_{k=1}^{\infty} g(\hat p_k)-\sum_{k=1}^{\infty} g(p_k)=\frac{1}{1-\alpha}(\hat h_{\alpha,0}-h_{\alpha,0}).
$$
From Proposition \ref{prop1}, $g'$ is $\beta$-H\"older continuous with
$\beta=\min\{\alpha-1, 1\}$ for $\alpha>1$.

If $\alpha>1.5$, then $\beta\in(2^{-1}, 1]$.
From Theorem \ref{cor1} and Theorem \ref{cor1b},  for any $r > 0$, we have
\beq\label{remt2}
\lim_{n\rightarrow\infty}\frac{1}{b_n^2}\log\pp
\left(\frac{\sqrt{n}}{b_n\sigma}
|\hat{\mathcal{T}}_{\alpha,n}-\mathcal{T}_\alpha|>r\right)= -\frac{r^2}{2}
\deq
where
 $$
\sigma^2=\left(\frac{\alpha}{\alpha-1}\right)^2\left[\sum_{k=1}^{\infty}p_{k}^{2\alpha-1}
-\left(\sum_{k=1}^{\infty}p_{k}^\alpha\right)^2\right],
$$
and the moderate deviation scale $\{b_n, n\ge 1\}$ is a sequence of positive numbers satisfying
$$
b_n\rightarrow\infty\ \ \text{and}\ \ \frac{b_n}{\sqrt{n}}\to 0.
$$
\end{exa}

\begin{exa}Let the index $h_{\alpha,0}$ be defined in (\ref{h}). For $\alpha>1$, consider  R\'enyi entropy
$$
\mathcal{R}_\alpha:=\frac{1}{1-\alpha}\log h_{\alpha,0}=\frac{1}{1-\alpha}\log\sum_{k=1}^\infty p_k^\alpha
$$
and its plug-in estimator
$$
\hat{\mathcal{R}}_{\alpha,n}:=\frac{1}{1-\alpha}\log \hat h_{\alpha,0}=\frac{1}{1-\alpha}\log\sum_{k=1}^\infty \hat p_k^\alpha.
$$
By using Taylor's formula, we have
$$
\log \hat h_{\alpha,0}=\log h_{\alpha,0}+\frac{\hat h_{\alpha,0}-h_{\alpha,0}}{h_{\alpha,0}}+R_{h_{\alpha,0}}(\hat h_{\alpha,0})
$$
where
$$
R_{h_{\alpha,0}}(\hat h_{\alpha,0}):=-\frac{1}{2\xi^2}(\hat h_{\alpha,0}-h_{\alpha,0})^2
$$
and $\xi$ is between $\hat h_{\alpha,0}$ and $h_{\alpha,0}$. Hence we get
\beq\label{remb1}
\hat{\mathcal{R}}_{\alpha,n}-\mathcal{R}_\alpha=\frac{\hat h_{\alpha,0}-h_{\alpha,0}}{(1-\alpha)h_{\alpha,0}}+\frac{1}{1-\alpha}R_{h_{\alpha,0}}(\hat h_{\alpha,0}).
\deq
Let $g(x)=(1-\alpha)^{-1}h_{\alpha,0}^{-1}x^{\alpha}$ and
$$
\hat\theta_n-\theta:=\sum_{k=1}^{\infty} g(\hat p_k)-\sum_{k=1}^{\infty} g(p_k)=\frac{\hat h_{\alpha,0}-h_{\alpha,0}}{(1-\alpha)h_{\alpha,0}}.
$$
From Proposition \ref{prop1}, $g'$ is $\beta$-H\"older continuous with
$\beta=\min\{\alpha-1, 1\}$ for $\alpha>1$.

If $\alpha>1.5$, then $\beta\in(2^{-1}, 1]$.
From Theorem \ref{cor1} and Theorem \ref{cor1b},  for any $r > 0$, we have
\beq\label{remb2}
\lim_{n\rightarrow\infty}\frac{1}{b_n^2}\log\pp
\left(\frac{\sqrt{n}}{b_n\sigma(\alpha-1)h_{\alpha,0}}|\hat h_{\alpha,0}-h_{\alpha,0}|>r\right)= -\frac{r^2}{2}
\deq
where
 $$
\sigma^2=\left(\frac{\alpha}{(\alpha-1)h_{\alpha,0}}\right)^2\left[\sum_{i=1}^{\infty}p_{i}^{2\alpha-1}
-\left(\sum_{i=1}^{\infty}p_{i}^\alpha\right)^2\right],
$$
and the moderate deviation scale $\{b_n, n\ge 1\}$ is a sequence of positive numbers satisfying
$$
b_n\rightarrow\infty\ \ \text{and}\ \ \frac{b_n}{\sqrt{n}}\to 0.
$$
Since $\sqrt{n}/b_n\to\infty$, then from (\ref{remb2}), for any $r>0$, we have
$$
\lim_{n\rightarrow\infty}\frac{1}{b_n^2}\log\pp
\left(|\hat h_{\alpha,0}-h_{\alpha,0}|>r\right)= -\infty
$$
and
$$
\lim_{n\rightarrow\infty}\frac{1}{b_n^2}\log\pp
\left(\frac{\sqrt{n}}{2b_n\sigma(\alpha-1)h_{\alpha,0}^2}(\hat h_{\alpha,0}-h_{\alpha,0})^2>r\right)= -\infty,
$$
which implies
$$
\lim_{n\rightarrow\infty}\frac{1}{b_n^2}\log\pp
\left(\frac{\sqrt{n}}{b_n}|R_{h_{\alpha,0}}(\hat h_{\alpha,0})|>r\right)= -\infty.
$$
Hence from (\ref{remb1}) and (\ref{remb2}), we have
$$
\lim_{n\rightarrow\infty}\frac{1}{b_n^2}\log\pp
\left(\frac{\sqrt{n}}{b_n\sigma}|\hat{\mathcal{R}}_{\alpha,n}-\mathcal{R}_\alpha|>r\right)= -\frac{r^2}{2}.
$$
\end{exa}

\begin{exa}Let the index $h_{\alpha,0}$ be defined in (\ref{h}). For $\alpha>1$, consider Hill diversity number
$$
\mathcal{N}_\alpha:=(h_{\alpha,0})^{\frac{1}{1-\alpha}}=\left(\sum_{k=1}^\infty p_k^\alpha\right)^{\frac{1}{1-\alpha}}
$$
and its plug-in estimator
$$
\hat{\mathcal{N}}_{\alpha,n}:=(\hat h_{\alpha,0})^{\frac{1}{1-\alpha}}=\left(\sum_{k=1}^\infty \hat p_k^\alpha\right)^{\frac{1}{1-\alpha}}.
$$
By using Taylor's formula, we have
$$
\left(\hat h_{\alpha,0}\right)^{\frac{1}{1-\alpha}}=( h_{\alpha,0})^{\frac{1}{1-\alpha}}+\frac{1}{1-\alpha}(h_{\alpha,0})^{\frac{\alpha}{1-\alpha}}(\hat h_{\alpha,0}-h_{\alpha,0})+R_{h_{\alpha,0}}(\hat h_{\alpha,0})
$$
where
$$
R_{h_{\alpha,0}}(\hat h_{\alpha,0}):=\frac{\alpha}{2(1-\alpha)^2}\xi^{\frac{2\alpha-1}{1-\alpha}}(\hat h_{\alpha,0}-h_{\alpha,0})^2
$$
and $\xi$ is between $\hat h_{\alpha,0}$ and $h_{\alpha,0}$. Hence we get
\beq\label{remn1}
\hat{\mathcal{N}}_{\alpha,n}-\mathcal{N}_\alpha=\frac{1}{1-\alpha}(h_{\alpha,0})^{\frac{\alpha}{1-\alpha}}(\hat h_{\alpha,0}-h_{\alpha,0})+R_{h_{\alpha,0}}(\hat h_{\alpha,0}).
\deq
Let $g(x)=(1-\alpha)^{-1}(h_{\alpha,0})^{\frac{\alpha}{1-\alpha}}x^{\alpha}$ and
$$
\hat\theta_n-\theta:=\sum_{k=1}^{\infty} g(\hat p_k)-\sum_{k=1}^{\infty} g(p_k)=\frac{1}{1-\alpha}(h_{\alpha,0})^{\frac{\alpha}{1-\alpha}}(\hat h_{\alpha,0}-h_{\alpha,0}).
$$
From Proposition \ref{prop1}, $g'$ is $\beta$-H\"older continuous with
$\beta=\min\{\alpha-1, 1\}$ for $\alpha>1$.

If $\alpha>1.5$, then $\beta\in(2^{-1}, 1]$.
From Theorem \ref{cor1} and Theorem \ref{cor1b},  for any $r > 0$, we have
\beq\label{remn2}
\lim_{n\rightarrow\infty}\frac{1}{b_n^2}\log\pp
\left(\frac{\sqrt{n}}{b_n\sigma(\alpha-1)}(h_{\alpha,0})^{\frac{\alpha}{1-\alpha}}|\hat h_{\alpha,0}-h_{\alpha,0}|>r\right)= -\frac{r^2}{2}
\deq
where
 $$
\sigma^2=\left(\frac{\alpha}{(\alpha-1)}(h_{\alpha,0})^{\frac{\alpha}{1-\alpha}}\right)^2\left[\sum_{k=1}^{\infty}p_{k}^{2\alpha-1}
-\left(\sum_{k=1}^{\infty}p_{k}^\alpha\right)^2\right],
$$
and the moderate deviation scale $\{b_n, n\ge 1\}$ is a sequence of positive numbers satisfying
$$
b_n\rightarrow\infty\ \ \text{and}\ \ \frac{b_n}{\sqrt{n}}\to 0.
$$
Since $\sqrt{n}/b_n\to\infty$, then from (\ref{remn2}), for any $r>0$, we have
$$
\lim_{n\rightarrow\infty}\frac{1}{b_n^2}\log\pp
\left(|\hat h_{\alpha,0}-h_{\alpha,0}|>r\right)= -\infty
$$
and
$$
\lim_{n\rightarrow\infty}\frac{1}{b_n^2}\log\pp
\left(\frac{\sqrt{n}}{2b_n\sigma(\alpha-1)^2}(h_{\alpha,0})^{\frac{2\alpha}{1-\alpha}}(\hat h_{\alpha,0}-h_{\alpha,0})^2>r\right)= -\infty,
$$
which implies
$$
\lim_{n\rightarrow\infty}\frac{1}{b_n^2}\log\pp
\left(\frac{\sqrt{n}}{b_n}|R_{h_{\alpha,0}}(\hat h_{\alpha,0})|>r\right)= -\infty.
$$
Hence from (\ref{remn1}) and (\ref{remn2}), we have
$$
\lim_{n\rightarrow\infty}\frac{1}{b_n^2}\log\pp
\left(\frac{\sqrt{n}}{b_n\sigma}|\hat{\mathcal{N}}_{\alpha,n}-\mathcal{N}_\alpha|>r\right)= -\frac{r^2}{2}.
$$
\end{exa}

\section{Proofs of main results}
We state some useful lemmas to prove these main results.
\begin{lem}\label{lemM} \cite[Lemma 6.1]{G-Z}
If $g: [0,1] \rightarrow \rr$ is differentiable on $[0,1]$ and its derivative $g'$ is $\beta$-H\"older continuous, then for any $a \in (0,1]$ we can write
$$ g(x)=g(a)+g'(a)(x-a)+R_a(x),$$
where
$$|R_a(x)|\leq M|x-a|^{\beta+1}$$
for some $M > 0$.
\end{lem}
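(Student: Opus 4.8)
The plan is to realize the remainder as an integral of the increment of $g'$ and then invoke the H\"older bound directly. First I would simply \emph{define} $R_a(x):=g(x)-g(a)-g'(a)(x-a)$, so that the claimed decomposition holds tautologically and only the size estimate on $R_a$ remains to be proved. Since $g'$ is $\beta$-H\"older continuous, it is in particular continuous on $[0,1]$; hence $g$ is continuously differentiable and the Fundamental Theorem of Calculus applies, giving $g(x)-g(a)=\int_a^x g'(t)\,dt$ for every $x,a\in[0,1]$.

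Subtracting the constant $g'(a)(x-a)=\int_a^x g'(a)\,dt$, I would rewrite the remainder as
$$
R_a(x)=\int_a^x\bigl(g'(t)-g'(a)\bigr)\,dt.
$$
By Definition \ref{def1} there is a constant $K>0$ with $|g'(t)-g'(a)|\le K|t-a|^\beta$ for all $t,a\in[0,1]$. Passing the absolute value inside the integral and inserting this bound then yields
$$
|R_a(x)|\le\left|\int_a^x K|t-a|^\beta\,dt\right|=\frac{K}{\beta+1}\,|x-a|^{\beta+1},
$$
so that taking $M:=K/(\beta+1)$ completes the proof.

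There is no genuine obstacle here: the argument is a routine pairing of the Fundamental Theorem of Calculus with the defining H\"older inequality for $g'$. The only points deserving a line of care are (i) the integrability of $g'$, which is immediate from its continuity as a H\"older function and legitimizes the integral representation of $g(x)-g(a)$; and (ii) the orientation $x<a$, where one replaces $\int_a^x$ by $-\int_x^a$ and verifies that the same bound $\tfrac{K}{\beta+1}|x-a|^{\beta+1}$ emerges, so that the estimate is symmetric in the position of $x$ relative to $a$. I would also remark that the hypothesis $a\in(0,1]$ in the statement is not actually used—the estimate holds for every $a\in[0,1]$—and that the constant $M$ depends only on $K$ and $\beta$, hence is uniform in both $a$ and $x$, which is precisely the uniformity the later Taylor-expansion arguments require.
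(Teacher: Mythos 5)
Your proof is correct. Note first that the paper itself supplies no argument for this lemma: it is imported verbatim by citation from Grabchak and Zhang \cite[Lemma 6.1]{G-Z}, so the comparison is with the standard proof there rather than with anything in this manuscript. That proof runs through the mean value theorem: writing $R_a(x)=g(x)-g(a)-g'(a)(x-a)=(g'(\xi)-g'(a))(x-a)$ for some $\xi$ between $a$ and $x$, the H\"older condition gives $|R_a(x)|\le K|\xi-a|^{\beta}|x-a|\le K|x-a|^{\beta+1}$, i.e.\ $M=K$. Your route via the fundamental theorem of calculus, $R_a(x)=\int_a^x(g'(t)-g'(a))\,dt$, is equally legitimate --- the H\"older hypothesis makes $g'$ continuous, so $g\in C^1([0,1])$ and the integral representation is justified exactly as you say --- and it buys the slightly sharper constant $M=K/(\beta+1)$, which is immaterial here since the later arguments only use the existence of some uniform $M$. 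Your handling of the orientation $x<a$ is correct, and your side remarks are accurate as well: the restriction $a\in(0,1]$ in the statement plays no role once $g$ is assumed differentiable on all of $[0,1]$ (it is an artefact of the source, where the lemma is stated in a form convenient for functions like $x\mapsto x\log x$ whose behaviour at $0$ needs care), and the uniformity of $M$ in both $a$ and $x$ is precisely what the proofs of Theorems \ref{thm1} and \ref{thm2} require when summing the remainders $R_{p_{n,i}}(\hat p_{n,i})$ over $i$.
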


\begin{lem}\label{lem-hr} \cite[Theorem 3.1]{H-R}
Let $X_1, X_2, \ldots, X_n$ be independent random variables defined on a probability $(\Omega, \FF, \pp)$.
Let us consider for all integer $n\ge 2$,
$$
U_n=\sum_{i=2}^n \sum_{j=1}^{i-1} g_{i,j}(X_i, X_j),
$$
where the $g_{i,j}: \rr\times\rr\to \rr$ are Borel measurable functions verifying
$$
\ee\left(g_{i,j}(X_i, X_j)|X_i\right)=0\ \ \text{and}\ \ \ee\left(g_{i,j}(X_i, X_j)|X_j\right)=0.
$$
Let $u>0$, $\varepsilon>0$ and let $|g_{i,j}|\le A$ for all $i,j$. Then we have
$$
\aligned
&\pp\left[ U_n\ge (1+\varepsilon)C\sqrt{2u}+\left(2\sqrt{\kappa} D+\frac{1+\varepsilon}{3} F\right)u \right.\\
& \ \ \ \ \ \ \ \ \ \ \ \ \ \ \ \ \ \
\left.
+\left(\sqrt{2}\kappa(\varepsilon)+\frac{2\sqrt{\kappa}}{3}\right)Bu^{3/2}+\frac{\kappa(\varepsilon)}{3} Au^2\right] \\
\le & 3e^{-u} \wedge 1.
\endaligned
$$
Here
\beq\label{C}
C^2=\sum_{i=2}^n \sum_{j=1}^{i-1}\ee\left(g_{i,j}^2(X_i, X_j)\right),
\deq
\beq\label{D}
\aligned
D=&\sup\left\{\ee\left(\sum_{i=2}^n\sum_{j=1}^{i-1}g_{i,j}(X_i, X_j)a_i(X_i)b_j(X_j)\right): \right. \\
& \ \ \ \ \ \ \ \ \ \ \ \ \ \ \ \ \ \  \ \ \ \ \ \ \ \
\left. \ee\left(\sum_{i=2}^na_i^2(X_i)\right)\le 1, \ \ \ee\left(\sum_{j=1}^{n-1}b_j^2(X_j)\right)\le 1
\right\},
\endaligned
\deq
\beq\label{F}
F=\ee\left(\sup_{i,t}\left|\sum_{j=1}^{i-1}g_{i,j}(t, X_j)\right|\right),
\deq
\beq\label{B}
B^2=\max\left\{\sup_{i,t}\left(\sum_{j=1}^{i-1}\ee\left(g_{i,j}^2(t, X_j)|X_i=t\right)\right),\
\sup_{j,t}\left(\sum_{i=j+1}^{n}\ee\left(g_{i,j}^2(X_i, t)|X_j=t\right)\right)
\right\},
\deq
where $\kappa$ and $\kappa(\varepsilon)$ can be chosen respectively equal to $4$ and $(2.5+32\varepsilon^{-1})$.
\end{lem}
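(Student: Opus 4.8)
Because this lemma is quoted verbatim as Theorem~3.1 of \cite{H-R}, its ``proof'' in the present paper is by citation; nevertheless, I sketch the route one would take to establish such a statement, since the plan below is exactly the standard theory of sharp exponential bounds for \emph{canonical} (completely degenerate) U-statistics of order two. The object $U_n=\sum_{i>j}g_{i,j}(X_i,X_j)$ is canonical precisely because of the two conditioning identities $\ee(g_{i,j}(X_i,X_j)\mid X_i)=\ee(g_{i,j}(X_i,X_j)\mid X_j)=0$, and the five quantities $A,B,C,D,F$ are the parameters that govern the four natural scales of such a variable.

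The first step is \emph{decoupling}: by the de la Pe\~{n}a--Montgomery-Smith inequalities one replaces $U_n$ by its decoupled analogue $\sum_{i>j}g_{i,j}(X_i,X_j')$, where $(X_j')$ is an independent copy of $(X_j)$, at the cost of an absolute multiplicative constant. The point is that, conditionally on one of the two samples, the decoupled sum is a genuine sum of conditionally independent centered summands, which opens the door to iterated scalar concentration. I would then establish an $L_p$ moment inequality of Gin\'e--Lata\l{}a--Zinn type: conditioning on $(X_i)$, the inner sums $\sum_{j<i}g_{i,j}(X_i,X_j')$ are sums of conditionally independent centered variables, to which a Rosenthal-- (or Burkholder--Davis--Gundy--) type bound applies; taking the $L_p$ norm in $(X_i)$ and applying a second such bound produces four terms carrying the powers $p^{1/2},p,p^{3/2},p^2$. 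Matching coefficients identifies the $p^{1/2}$ term with the Hilbert--Schmidt norm $C$ of \eqref{C}, the two $p$ terms with the bilinear/operator norm $D$ of \eqref{D} and the expected supremum $F$ of \eqref{F}, the $p^{3/2}$ term with the conditional-variance quantity $B$ of \eqref{B}, and the $p^{2}$ term with the uniform bound $A$.

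The final step is \emph{exponentiation}: a moment inequality of the form $\|U_n\|_p\le \kappa_1\sqrt{p}\,C+\kappa_2\,p\,(D+F)+\kappa_3\,p^{3/2}B+\kappa_4\,p^{2}A$ is converted into a tail bound by Markov's inequality and optimisation in $p$, choosing $p\asymp u$. The four scales $u^{1/2},u,u^{3/2},u^{2}$ appearing in the statement are exactly the images of the four moment scales, while the splitting parameter $\varepsilon$ records the slack one must leave when separating the leading sub-Gaussian term $C\sqrt{2u}$ from the heavier-tailed remainder. Tracking the \emph{numerical} constants through decoupling and through each Rosenthal bound, rather than absorbing them into an anonymous absolute constant, is what produces the explicit values $\kappa=4$ and $\kappa(\varepsilon)=2.5+32\varepsilon^{-1}$.

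The main obstacle is twofold. Analytically, the delicate term is $D$: its variational definition in \eqref{D} is the operator norm of the random kernel $(g_{i,j})$ acting between two $L_2$ unit balls, and bounding it sharply, rather than by the crude Hilbert--Schmidt estimate, requires a duality/chaining argument and is the technical heart of the second-order theory; the companion term $F$ is controlled by a Talagrand-type supremum bound. Quantitatively, the difficulty is that all of this must be carried out with explicit constants \emph{at every scale simultaneously}, so that no step may be replaced by a soft ``up to an absolute constant'' estimate. It is precisely this fine numerical control, and not merely the qualitative four-scale tail shape, that makes the cited result usable in the moderate-deviation estimates of the present paper.
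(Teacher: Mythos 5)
The paper offers no proof of this lemma at all --- it is quoted verbatim as Theorem~3.1 of \cite{H-R} --- and your proposal correctly identifies that its ``proof'' here is by citation, which is exactly the paper's treatment, so you are in full agreement with the paper. As a minor bibliographic point only: the explicit constants in \cite{H-R} are actually obtained via a martingale decomposition of $U_n$ combined with Talagrand--Massart-type concentration (with constants) for the suprema controlling $B$, $D$ and $F$, rather than the decoupling--moment--optimisation route of Gin\'e--Lata\l{}a--Zinn that you sketch; but since the present paper invokes the result without proof, this distinction does not affect the correctness of your proposal relative to the paper.
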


\begin{proof} [{\bf Proof of Theorem \ref{thm1}}]
From Lemma \ref{lemM}, we have
\beq\label{thm-main}
\hat \theta_n-\theta_n=\sum_{i=1}^\infty g'(p_{n,i})(\hat p_{n,i}-p_{n,i})+\sum_{i=1}^\infty R_{p_{n,i}}(\hat p_{n,i}).
\deq
For every $n\ge 1$ and $1\le k\le n$, let us define
$$
T_{k,n}:=\sum_{i=1}^{\infty}(I_{\{X_{k, n}=a_i\}}-p_{n,i})g'(p_{n,i}),
$$
then we have
$$
\sum_{i=1}^\infty g'(p_{n,i})(\hat p_{n,i}-p_{n,i})=\frac{1}{n}\sum_{k=1}^nT_{k,n}
$$
and
$$
\aligned
Var(T_{k,n})=&\ee\left(\sum_{i=1}^{\infty}I_{\{X_{k, n}=a_i\}}g'(p_{n,i})\right)^2-\left(\sum_{i=1}^{\infty}p_{n,i}g'(p_{n,i})\right)^2\\
=&\sum_{i=1}^{\infty}p_{n,i}\left(g'(p_{n,i})\right)^2-\left(\sum_{i=1}^{\infty}p_{n,i}g'(p_{n,i})\right)^2=\sigma_n^2.
\endaligned
$$
Since $g'$ is Lipschitz continuous, there exists a positive constant $M$, such that
\beq\label{g}
|T_{k,n}|\le M\sum_{i=1}^{\infty}(I_{\{X_{k, n}=i\}}+p_{n,i})\le 2M.
\deq
In order to prove Theorem \ref{thm1}, it is enough to show the following claims: for any $r> 0$,
\beq\label{thm-main1}
\aligned
\lim_{n\rightarrow\infty}\frac{1}{b_n^2}\log\pp\left(\frac{\sqrt{n}}{b_n\sigma_n}\left|\sum_{i=1}^{\infty}(\hat p_{n,i}-p_{n,i})g'(p_{n,i})\right|>r\right)=-\frac{r^2}{2}
\endaligned
\deq
and for any $\varepsilon > 0$,
\beq\label{thm-main2}
\lim_{n\rightarrow\infty}\frac{1}{b_n^2}\log\pp
\left(\frac{\sqrt{n}}{b_n\sigma_n}\left|\sum_{i=1}^\infty R_{p_{n,i}}(\hat p_{n,i})\right|>\varepsilon\right)= -\infty.
\deq

{\bf Proof of the claim (\ref{thm-main1}).}  By using G\"artner-Ellis Theorem (see \cite{2}), we have only to prove that the following limit holds: for any $\lambda\in\rr$,
\beq\label{thm-main1-1}
\lim_{n\rightarrow\infty}\frac{1}{b_n^2}\log\ee
\exp\left(\frac{\lambda b_n}{\sqrt{n}\sigma_n}\sum_{k=1}^nT_{k,n}\right)=\frac{\lambda^2}{2}.
\deq
From the fact that $T_{1,n}$ is bounded, and the condition $\frac{b_n}{\sqrt{n}\sigma_n^3}\to 0$ and the following elementary inequality
$$
\left|e^x-1-x-\frac{x^2}{2}\right|\le \frac{|x|^3}{3!}e^{|x|}\ \ \text{for}\  x\in\rr,
$$
we get
\beq\label{thm-main1-3}
\aligned
\left|\ee
\exp\left(\frac{\lambda b_n}{\sqrt{n}\sigma_n}T_{1,n}\right)-1
-\frac{\lambda^2b_n^2}{2n}\right|
\le & \ee\left(\frac{|\lambda|^3 b_n^3}{3!\sqrt{n}^3\sigma_n^3}|T_{1,n}|^3 e^{\frac{b_n}{\sqrt{n}\sigma_n}|\lambda T_{1,n}|}\right)\\
\le& C_{1,\lambda} \frac{b_n^3}{\sqrt{n}^3\sigma_n}
\endaligned
\deq
where $C_{1,\lambda}$ is a positive constant dependent on $\lambda$. Furthermore, since $\frac{b_n}{\sqrt{n}\sigma_n}\to 0$, then we have
$$
\frac{b_n^3}{\sqrt{n}^3\sigma_n}=o\left(\frac{b_n^2}{n}\right),
$$
which implies
$$
\ee\exp\left(\frac{\lambda b_n}{\sqrt{n}\sigma_n}T_{1,n}\right)=1+\frac{\lambda^2b_n^2}{2n}+o\left(\frac{b_n^2}{n}\right).
$$
Hence we can get
\begin{align*}
\lim_{n\rightarrow\infty}\frac{1}{b_n^2}\log\ee
\exp\left\{\frac{\lambda b_n}{\sqrt{n}\sigma_n}\sum_{k=1}^nT_{k,n}\right\}
=&\lim_{n\rightarrow\infty}\frac{n}{b_n^2}\log\ee
\exp\left\{\frac{\lambda b_n}{\sqrt{n}\sigma_n}T_{1,n}\right\}\\
=&\lim_{n\rightarrow\infty}\frac{n}{b_n^2}\log\left(1+\frac{ \lambda^2 b_n^2}{2n}+o\left(\frac{b_n^2}{n}\right)\right)
=\frac{\lambda^2}{2},
\end{align*}
which is the claim (\ref{thm-main1}).

{\bf Proof of the claim (\ref{thm-main2}).}
From Lemma \ref{lemM}, we have
\beq\label{thm-r}
\aligned
\left|\sum_{i=1}^\infty R_{p_{n,i}}(\hat p_{n,i})\right|
\leq & M\sum_{i=1}^\infty(\hat p_{n,i}-p_{n,i})^2\\
=&\frac{M}{n^2}\sum_{i=1}^\infty\sum_{k=1}^n\left(I_{\{X_{k, n}=a_i\}}-p_{n,i}\right)^2\\
&\ \ \ \  +\frac{M}{n^2}\sum_{i=1}^\infty\sum_{k\ne l}^n\left(I_{\{X_{k, n}=a_i\}}-p_{n,i}\right)\left(I_{\{X_{l, n}=a_i\}}-p_{n,i}\right).
\endaligned
\deq
Hence, in order to prove (\ref{thm-main2}), it is enough to show that the following claims hold: for any $\varepsilon > 0$,
\beq\label{thm-r1}
\lim_{n\to\infty}\frac{1}{b_n^2}\log \pp\left(\frac{1}{\sigma_n b_n n^{3/2}}\left|\sum_{i=1}^\infty\sum_{k=2}^n \sum_{l=1}^{k-1} \left(I_{\{X_{k, n}=a_i\}}-p_{n,i}\right)\left(I_{\{X_{l, n}=a_i\}}-p_{n,i}\right)\right|>\varepsilon\right)=-\infty
\deq
and
\beq\label{thm-r2}
\lim_{n\to\infty}\frac{1}{b_n^2}\log \pp\left(\frac{1}{\sigma_n b_n n^{3/2}}\sum_{i=1}^\infty\sum_{k=1}^n\left(I_{\{X_{k, n}=a_i\}}-p_{n,i}\right)^2>\varepsilon\right)=-\infty.
\deq

Firstly, for every $n\ge 1$, let us define
$$
U_n=\sum_{k=2}^n \sum_{l=1}^{k-1} g_{k,l}(X_{k,n}, X_{l,n}),
$$
where
$$
g_{k,l}(X_{k,n}, X_{l,n})=\sum_{i=1}^\infty \left(I_{\{X_{k, n}=a_i\}}-p_{n,i}\right)\left(I_{\{X_{l,n}=a_i\}}-p_{n,i}\right).
$$
It is easy to check that $|g_{k,l}(X_{k,n}, X_{l,n})|\le 2$ and
$$
\ee\left(g_{k,l}(X_{k,n}, X_{l,n})|X_{k,n}\right)= \ee\left(g_{k,l}(X_{k,n}, X_{l,n})|X_{l,n}\right)=0,\ \ \ \ \ k\ne l.
$$
Now we shall estimate the parameters $C, D, F, B$ in Lemma \ref{lem-hr}. From the boundedness of $|g_{k,l}(X_{k,n}, X_{l,n})|$, we have
$$
C^2=\sum_{k=2}^n \sum_{l=1}^{k-1}\ee\left(g_{k,l}^2(X_{k,n}, X_{l,n})\right)\le 4n^2,
$$
$$
F=\ee\left(\sup_{k,t}\left|\sum_{l=1}^{k-1}g_{k,l}(t, X_{l,n})\right|\right)\le 2n
$$
and
$$
B^2\le 4n.
$$
Furthermore, under the conditions $\ee\left(\sum_{k=2}^na_k^2(X_{k,n})\right)\le 1$ and $\ee\left(\sum_{l=1}^{n-1}b_l^2(X_{l,n})\right)\le 1$, by using H\"{o}lder's inequality and Jensen's inequality, we have
$$
\aligned
&\left|\ee\left(\sum_{k=2}^n\sum_{l=1}^{k-1}g_{k,l}(X_{k,n}, X_{l,n})a_k(X_{k,n})b_l(X_{l,n})\right)\right|\\
=&\left|\ee\left(\sum_{k=2}^n\sum_{l=1}^{k-1}\sum_{i=1}^\infty \left(I_{\{X_{k, n}=a_i\}}-p_{n,i}\right)\left(I_{\{X_{l, n}=a_i\}}-p_{n,i}\right)a_k(X_{k,n})b_l(X_{l,n})\right)\right|\\
\le & \sum_{k=2}^n\sum_{l=1}^{k-1}2\ee|a_k(X_{k,n})|\ee|b_l(X_{l,n})|\\
\le & 2\sum_{k=2}^n\left(\ee a_k^2(X_{k,n})\right)^{1/2}\sum_{l=1}^{n-1}\left(\ee b_l^2(X_{l,n})\right)^{1/2}\\
\le & 2n\left(\left(\sum_{k=2}^n\ee a_k^2(X_{k,n})\right)\cdot\left(\sum_{l=1}^{n-1}\ee b_l^2(X_{l,n})\right)\right)^{1/2}\le 2n,
\endaligned
$$
which implies $D\le 2n$.

Let us define
\beq\label{delta}
\aligned
\Delta_n:=&(1+\varepsilon)C\sqrt{2u_n}+\left(2\sqrt{\kappa} D+\frac{1+\varepsilon}{3} F\right)u_n\\
&\ \ \ +\left(\sqrt{2}\kappa(\varepsilon)+\frac{2\sqrt{\kappa}}{3}\right)Bu^{3/2}_n+\frac{\kappa(\varepsilon)}{3} Au^2_n.
\endaligned
\deq
Since $g'$ is Lipschitz continuous, then we have
$$
\sigma^2_n=\sum_{i=1}^{\infty}p_{n,i}\left(g'\left( p_{n,i}\right)\right)^2-\left(\sum_{i=1}^{\infty}p_{n,i}g'\left( p_{n,i}\right)\right)^2\le M^2
$$
where $M$ is defined in (\ref{g}). From the condition $\frac{b_n}{\sqrt{n}\sigma_n}\to 0$,  we can choose a sequence of positive numbers $\{l_n, n\ge 1\}$ such that
$$
l_n\to\infty\ \ \text{and}\ \ \frac{\sqrt{n}\sigma_n}{l_nb_n}\to\infty.
$$
By taking the sequence $u_n=b_n \sqrt{n} \sigma_n/l_n$ in (\ref{delta}), we get
$$
\aligned
\Delta_n=O\left(n^{5/4}\sqrt{\frac{b_n\sigma_n}{l_n}}+\frac{b_n\sigma_nn^{3/2}}{l_n}+n^{5/4}\left(\frac{b_n\sigma_n}{l_n}\right)^{3/2}+\frac{b_n^2n\sigma_n^2}{l_n^2}\right).
\endaligned
$$
Moreover, from the condition $\frac{b_n}{\sqrt{n}\sigma_n}\to 0$, it is easy to check
$$
\frac{b_n\sigma_n n^{3/2}}{n^{5/4}\sqrt{b_n\sigma_n/l_n}}=n^{1/4}\sqrt{b_n\sigma_nl_n}=\left(\frac{\sqrt{n}\sigma_n}{b_n}b_n^2l_n\right)^{1/2}\to \infty,
$$
$$
\frac{b_n\sigma_n n^{3/2}}{(b_n\sigma_n/l_n)^{3/2}n^{5/4}}=\frac{n^{1/4}l_n^{3/2}}{\sqrt{b_n\sigma_n}}=\left(\frac{\sqrt{n}\sigma_n}{b_n}\frac{l_n^3}{\sigma_n^2}\right)^{1/2}\to \infty
$$
and
$$
\frac{b_n\sigma_n n^{3/2}}{b_n^2n\sigma_n^2 /l_n^2}=\frac{\sqrt{n}l_n^2}{b_n\sigma_n}=\frac{\sqrt{n}\sigma_n}{b_n}\frac{l_n^2}{\sigma_n^2}\to \infty,
$$
which yields that
$$
\aligned
\Delta_n=o\left(b_n\sigma_nn^{3/2}\right).
\endaligned
$$
Therefore, by using Lemma \ref{lem-hr}, for any $\varepsilon>0$, we have
$$
\aligned
\lim_{n\to\infty}\frac{1}{b_n^2}&\log \pp\left(\frac{1}{\sigma_n b_n n^{3/2}}\left|\sum_{i=1}^\infty\sum_{k=2}^n \sum_{l=1}^{k-1} \left(I_{\{X_{k, n}=a_i\}}-p_{n,i}\right)\left(I_{\{X_{l, n}=a_i\}}-p_{n,i}\right)\right|>\varepsilon\right)\\
=&\lim_{n\to\infty}\frac{1}{b_n^2}\log \pp\left(\left|U_n\right|>\sigma_n b_n n^{3/2}\varepsilon\right)\\
\le &\lim_{n\to\infty}\frac{1}{b_n^2}\log \pp\left(\left|U_n\right|>\Delta_n\right)\\
\le &-\lim_{n\to\infty}\frac{u_n}{b_n^2}\to-\infty,
\endaligned
$$
which is the claim (\ref{thm-r1}).

Next, for each $k$, since
$$
\sum_{i=1}^\infty\left(I_{\{X_{k, n}=a_i\}}-p_{n,i}\right)^2=\sum_{i=1}^\infty I_{\{X_{k, n}=a_i\}}-2\sum_{i=1}^\infty I_{\{X_{k, n}=a_i\}}p_{n,i}+\sum_{i=1}^\infty p_{n,i}^2\le 2,
$$
then for any $\varepsilon > 0$ and all $n$ large enough, we have
\begin{align*}
&\pp\left(\frac{1}{\sigma_n b_n n^{3/2}}\sum_{i=1}^\infty\sum_{k=1}^n\left(1_{\{X_{k, n}=i\}}-p_{n,i}\right)^2>\varepsilon\right)\\
\le& \sum_{k=1}^n\pp\left(\frac{1}{\sigma_n b_n \sqrt{n}}\sum_{i=1}^\infty\left(1_{\{X_{k, n}=i\}}-p_{n,i}\right)^2>\varepsilon\right)=0
\end{align*}
 which implies the claim (\ref{thm-r2}).

 Based on the above discussions, Theorem \ref{thm1} can be obtained.
\end{proof}

\begin{lem}\label{lemL}
Let $f$ be a Lipschitz continuous function in $[0,1]$. Assume that
$$
b_n\rightarrow\infty\ \ \text{and}\ \ \frac{b_n}{\sqrt{n}}\to 0,
$$
then for any $\varepsilon > 0$,
\beq\label{lemL-1}
\lim_{n\to\infty}\frac{1}{b_n^2}\log \pp\left(\left|\sum_{i=1}^\infty \hat p_{n,i}f(\hat p_{n,i})-\sum_{i=1}^\infty  p_{n,i}f(p_{n,i})\right|>\varepsilon\right)=-\infty.
\deq
\end{lem}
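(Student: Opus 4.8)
The plan is to split the difference into a ``linear'' part, whose fluctuations are of the type already handled in Theorem \ref{thm1}, and a ``nonlinear'' remainder that is dominated by $\sum_{i}(\hat p_{n,i}-p_{n,i})^2$. Write $L$ for the Lipschitz constant of $f$ and $F=\sup_{x\in[0,1]}|f(x)|$, which is finite because a Lipschitz function on $[0,1]$ is bounded. First I would use the elementary identity
\begin{align*}
&\sum_{i=1}^\infty \hat p_{n,i}f(\hat p_{n,i})-\sum_{i=1}^\infty p_{n,i}f(p_{n,i})\\
&\qquad=\sum_{i=1}^\infty \hat p_{n,i}\bigl(f(\hat p_{n,i})-f(p_{n,i})\bigr)+\sum_{i=1}^\infty (\hat p_{n,i}-p_{n,i})f(p_{n,i}).
\end{align*}
Since $b_n^{-2}\log(a_n+c_n)\to-\infty$ whenever $b_n^{-2}\log a_n\to-\infty$ and $b_n^{-2}\log c_n\to-\infty$, it suffices to show that each of the two terms on the right exceeds $\varepsilon/2$ only with probability that is super-exponentially small at scale $b_n^2$.

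For the nonlinear term the Lipschitz property gives
\[
\left|\sum_{i=1}^\infty \hat p_{n,i}\bigl(f(\hat p_{n,i})-f(p_{n,i})\bigr)\right|\le L\sum_{i=1}^\infty \hat p_{n,i}|\hat p_{n,i}-p_{n,i}|,
\]
and, writing $\hat p_{n,i}=p_{n,i}+(\hat p_{n,i}-p_{n,i})$ and then using the Cauchy--Schwarz inequality together with $\sum_i p_{n,i}=1$ and $p_{n,i}\le1$, I would bound
\[
\sum_{i=1}^\infty \hat p_{n,i}|\hat p_{n,i}-p_{n,i}|\le \sum_{i=1}^\infty (\hat p_{n,i}-p_{n,i})^2+\left(\sum_{i=1}^\infty (\hat p_{n,i}-p_{n,i})^2\right)^{1/2}.
\]
Hence the whole nonlinear term is controlled by $\sum_i(\hat p_{n,i}-p_{n,i})^2$, and choosing $\delta>0$ so small that $L(\delta+\sqrt\delta)\le\varepsilon/2$ reduces the matter to proving, for every fixed $\delta>0$,
\[
\lim_{n\to\infty}\frac{1}{b_n^2}\log\pp\left(\sum_{i=1}^\infty (\hat p_{n,i}-p_{n,i})^2>\delta\right)=-\infty.
\]
This is exactly the estimate established in the proof of Theorem \ref{thm1} (claims (\ref{thm-r1}) and (\ref{thm-r2})): decompose the sum into its diagonal part, which is deterministically $O(1/n)$, and the off-diagonal degenerate $U$-statistic, to which Lemma \ref{lem-hr} applies. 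Since the threshold here is a fixed constant rather than the vanishing scale $b_n\sigma_n/\sqrt n$ used there, the same computation goes through with the choice $u_n\asymp n$, producing a bound of order $e^{-cn}$ and thus rate $-cn/b_n^2\to-\infty$, using $b_n/\sqrt n\to0$.

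For the linear term, set $S_{k,n}:=\sum_{i=1}^\infty\bigl(I_{\{X_{k,n}=a_i\}}-p_{n,i}\bigr)f(p_{n,i})$, so that $\sum_i(\hat p_{n,i}-p_{n,i})f(p_{n,i})=n^{-1}\sum_{k=1}^n S_{k,n}$ is an average of i.i.d.\ centred variables with $|S_{k,n}|\le 2F$. Hoeffding's inequality then gives $\pp\bigl(|n^{-1}\sum_{k}S_{k,n}|>\varepsilon/2\bigr)\le 2\exp(-cn\varepsilon^2)$ for some $c>0$, whence $b_n^{-2}\log\pp(\cdots)\le b_n^{-2}(\log2-cn\varepsilon^2)\to-\infty$, again because $n/b_n^2\to\infty$. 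Combining the two estimates yields (\ref{lemL-1}). The only genuinely delicate ingredient is the super-exponential concentration of $\sum_i(\hat p_{n,i}-p_{n,i})^2$, which rests on the $U$-statistic deviation inequality of Lemma \ref{lem-hr}; since this has already been carried out in the proof of Theorem \ref{thm1}, here it remains only to check that the fixed threshold $\delta$ is more favourable than the vanishing one used there.
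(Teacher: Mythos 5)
Your proof is correct and shares the paper's backbone --- reduce everything to (a) the linear empirical term and (b) quantities dominated by $\sum_{i=1}^\infty(\hat p_{n,i}-p_{n,i})^2$, whose super-exponential smallness at scale $b_n^2$ comes from the degenerate $U$-statistic inequality of Lemma \ref{lem-hr} --- but it differs in two worthwhile ways. First, the decomposition: the paper splits the difference into three terms, $\sum_i(\hat p_{n,i}-p_{n,i})f(p_{n,i})$, $\sum_i(\hat p_{n,i}-p_{n,i})(f(\hat p_{n,i})-f(p_{n,i}))$ and $\sum_i p_{n,i}(f(\hat p_{n,i})-f(p_{n,i}))$, handling the last by Cauchy--Schwarz; your two-term splitting, after writing $\hat p_{n,i}=p_{n,i}+(\hat p_{n,i}-p_{n,i})$ and applying Cauchy--Schwarz with $\sum_i p_{n,i}^2\le 1$, reproduces exactly the same three pieces, so this difference is cosmetic. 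Second, and more substantively: for the linear term the paper appeals to the exponential-moment/G\"artner--Ellis computation behind (\ref{thm-main1}) and in doing so invokes the condition $\sqrt{n}/(b_n\sigma_n)\to\infty$, which involves $\sigma_n$ and is not among the lemma's stated hypotheses; your Hoeffding bound is more elementary, needs only the boundedness $|S_{k,n}|\le 2F$, and uses precisely the stated assumption $b_n/\sqrt{n}\to 0$, which makes the lemma's proof self-contained. Likewise, for the quadratic part at the fixed threshold $\delta$ your rerun of Lemma \ref{lem-hr} with $u_n\asymp n$ is valid (all four terms in $\Delta_n$ are then $O(n^{3/2})+O(c\,n^2)+O(c^{3/2}n^2)+O(c^2n^2)$, so a small $c$ keeps $\Delta_n$ below $\delta n^2/4$, giving a bound of order $e^{-cn}$ and rate $-cn/b_n^2\to-\infty$), though strictly speaking you could have cited the paper's claims (\ref{thm-r1})--(\ref{thm-r2}) verbatim: since $b_n\sigma_n/\sqrt{n}\to 0$ and $\sigma_n$ is bounded, the fixed threshold $\delta$ eventually dominates the vanishing threshold $\varepsilon b_n\sigma_n/\sqrt{n}$ used there, so the probability you need is bounded by the one already controlled. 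One small point worth making explicit in either approach: Lemma \ref{lem-hr} is one-sided, so the absolute value $|U_n|$ requires applying it to both $U_n$ and $-U_n$ (the kernels $-g_{k,l}$ satisfy the same degeneracy and boundedness conditions), which the paper also passes over silently.
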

\begin{proof}
Firstly, we have
\begin{align*}
&\sum_{i=1}^\infty \hat p_{n,i}f(\hat p_{n,i})-\sum_{i=1}^\infty  p_{n,i}f(p_{n,i})\\
=& \sum_{i=1}^\infty (\hat p_{n,i}-p_{n,i})f(p_{n,i})+ \sum_{i=1}^\infty (\hat p_{n,i}-p_{n,i})(f(\hat p_{n,i})-f(p_{n,i}))\\
&\ \ \ \ \ \ +\sum_{i=1}^\infty p_{n,i}(f(\hat p_{n,i})-f(p_{n,i})).
\end{align*}
From the condition $\frac{\sqrt{n}}{b_n\sigma_n}\to \infty$ and by using similar proof of (\ref{thm-main1}),  it is not difficult to see that
\beq\label{lemL-2}
\frac{1}{b_n^2}\log \pp\left(\left|\sum_{i=1}^\infty (\hat p_{n,i}-p_{n,i})f(p_{n,i})\right|>\varepsilon\right)\to -\infty.
\deq
Similarly, from (\ref{thm-main2}), we have
\beq\label{lemL-3}
\aligned
&\frac{1}{b_n^2}\log \pp\left(\left| \sum_{i=1}^\infty (\hat p_{n,i}-p_{n,i})(f(\hat p_{n,i})-f(p_{n,i}))\right|>\varepsilon\right)\\
\le &\frac{1}{b_n^2}\log \pp\left(M\left| \sum_{i=1}^\infty (\hat p_{n,i}-p_{n,i})^2\right|>\varepsilon\right)\to-\infty.
\endaligned
\deq
Furthermore, by using Cauchy-Schwarz inequality and (\ref{lemL-3}), we get
\beq\label{lemL-4}
\aligned
&\frac{1}{b_n^2}\log \pp\left(\left|\sum_{i=1}^\infty p_{n,i}(f(\hat p_{n,i})-f(p_{n,i}))\right|>\varepsilon\right)\\
\le & \frac{1}{b_n^2}\log \pp\left(M\sum_{i=1}^\infty p_{n,i}|\hat p_{n,i}-p_{n,i}|>\varepsilon\right)\\
\le & \frac{1}{b_n^2}\log \pp\left(M\sqrt{\sum_{i=1}^\infty p_{n,i}^2}\sqrt{\sum_{i=1}^\infty|\hat p_{n,i}-p_{n,i}|^2}>\varepsilon\right)\\
\le & \frac{1}{b_n^2}\log \pp\left(M^2\sum_{i=1}^\infty(\hat p_{n,i}-p_{n,i})^2>\varepsilon\right) \to -\infty.
\endaligned
\deq
Based on the discussions, the desired result can be obtained.
\end{proof}

\begin{proof} [{\bf Proof of Corollary \ref{cor2}}]
For any $0<\varepsilon<r\wedge1$, we have
\begin{align*}
&\pp
\left(\frac{\sqrt{n}}{b_n\hat \sigma_n}|\hat{\theta}_n-\theta_n|>r\right)\\
=&\pp
\left(\frac{\sqrt{n}}{b_n\hat \sigma_n}|\hat{\theta}_n-\theta_n|>r,\  \left|\frac{\hat\sigma_n^2}{\sigma_n^2}-1\right|\le \varepsilon\right)+\pp
\left(\frac{\sqrt{n}}{b_n\hat \sigma_n}|\hat{\theta}_n-\theta_n|>r,\  \left|\frac{\hat\sigma_n^2}{\sigma_n^2}-1\right|>\varepsilon\right)\\
\le &\pp
\left(\frac{\sqrt{n}}{b_n \sigma_n}|\hat{\theta}_n-\theta_n|>r\sqrt{1-\varepsilon}\right)+\pp\left(\left|\frac{\hat\sigma_n^2}{\sigma_n^2}-1\right|> \varepsilon\right)
\end{align*}
and
\begin{align*}
&\pp
\left(\frac{\sqrt{n}}{b_n\hat \sigma_n}|\hat{\theta}_n-\theta_n|>r\right)\\
\ge &\pp
\left(\frac{\sqrt{n}}{b_n\hat \sigma_n}|\hat{\theta}_n-\theta_n|>r,\  \left|\frac{\hat\sigma_n^2}{\sigma_n^2}-1\right|\le \varepsilon\right)\\
\ge &\pp
\left(\frac{\sqrt{n}}{b_n \sigma_n}|\hat{\theta}_n-\theta_n|>r(1+\varepsilon),\  \left|\frac{\hat\sigma_n^2}{\sigma_n^2}-1\right|\le \varepsilon\right)\\
\ge &\pp
\left(\frac{\sqrt{n}}{b_n \sigma_n}|\hat{\theta}_n-\theta_n|>r\sqrt{1+\varepsilon}\right)-\pp\left(\left|\frac{\hat\sigma_n^2}{\sigma_n^2}-1\right|> \varepsilon\right).
\end{align*}
Firstly, we shall prove the following claim:
\beq\label{cor-r1}
\lim_{n\to\infty}\frac{1}{b_n^2}\log \pp\left(\left|\frac{\hat\sigma_n^2}{\sigma_n^2}-1\right|> \varepsilon\right)=-\infty.
\deq
Since $g'$ is Lipschitz continuous, there exists a positive constant $M$, such that
$$
\left|\sum_{i=1}^{\infty}\hat p_{n,i}g'\left( \hat p_{n,i}\right)\right|\le M \ \ \text{and} \ \ \left|\sum_{i=1}^{\infty}p_{n,i}g'\left( p_{n,i}\right)\right|\le M.
$$
Moreover, there exists a positive constant $K$, such that for any $x, y\in[0,1]$,
$$
\left|(g'(x))^2-(g'(y))^2\right|\le \left|g'(x)+g'(y)\right|\left|g'(x)-g'(y)\right|\le 2KM|x-y|,
$$
namely, $(g')^2$ is also Lipschitz continuous.
Hence we have
\beq\label{cor-r2}
\aligned
\left|\hat\sigma_n^2-\sigma_n^2\right|\le&\left|\sum_{i=1}^{\infty}\hat p_{n,i}\left(g'\left( \hat p_{n,i}\right)\right)^2-\sum_{i=1}^{\infty}p_{n,i}\left(g'\left( p_{n,i}\right)\right)^2\right|\\
&\ \ \ \ +\left|\left(\sum_{i=1}^{\infty}p_{n,i}g'\left( p_{n,i}\right)\right)^2
-\left(\sum_{i=1}^{\infty}\hat p_{n,i}g'\left( \hat p_{n,i}\right)\right)^2\right|\\
\le & \left|\sum_{i=1}^{\infty}\hat p_{n,i}\left(g'\left( \hat p_{n,i}\right)\right)^2-\sum_{i=1}^{\infty}p_{n,i}\left(g'\left( p_{n,i}\right)\right)^2\right|\\
&\ \ \ \ +2M\left|\sum_{i=1}^{\infty}\hat p_{n,i}g'\left( \hat p_{n,i}\right)-\sum_{i=1}^{\infty}p_{n,i}g'\left( p_{n,i}\right)\right|.
\endaligned
\deq
From Lemma \ref{lemL}, for any $\varepsilon > 0$, we have
$$\frac{1}{b_n^2}\log \pp\left(\left|\sum_{i=1}^\infty \hat p_{n,i}g'(\hat p_{n,i})-\sum_{i=1}^\infty  p_{n,i}g'(p_{n,i})\right|>\varepsilon\right)\to-\infty$$
and
$$\frac{1}{b_n^2}\log \pp\left(\left|\sum_{i=1}^\infty \hat p_{n,i}(g'(\hat p_{n,i}))^2-\sum_{i=1}^\infty  p_{n,i}(g'(p_{n,i}))^2\right|>\varepsilon\right)\to-\infty,$$
which, together with (\ref{cor-r2}), implies that
\beq\label{cor-r3}
\frac{1}{b_n^2}\log \pp\left(\left|\hat\sigma_n^2-\sigma_n^2\right|>\varepsilon\right)\to-\infty.
\deq
From the condition $\liminf_{n\to\infty} \sigma_n^2>0$, (\ref{cor-r3}) and the following relation
$$
\frac{\hat\sigma_n^2}{\sigma_n^2}-1=\frac{\hat\sigma_n^2-\sigma_n^2}{\sigma_n^2},
$$
the claim (\ref{cor-r1}) holds.

From Theorem \ref{thm1}, we have
 $$
\limsup_{n\to\infty}\frac{1}{b_n^2}\log \pp\left(\frac{\sqrt{n}}{b_n\hat \sigma_n}|\hat{\theta}_n-\theta_n|>r\right)\le -\frac{r^2(1-\varepsilon)}{2}
 $$
and
$$
\liminf_{n\to\infty}\frac{1}{b_n^2}\log \pp\left(\frac{\sqrt{n}}{b_n\hat \sigma_n}|\hat{\theta}_n-\theta_n|>r\right)\ge -\frac{r^2(1+\varepsilon)}{2}.
 $$
By the arbitrariness of $\varepsilon$, we can get
 $$
\lim_{n\to\infty}\frac{1}{b_n^2}\log \pp\left(\frac{\sqrt{n}}{b_n\hat \sigma_n}|\hat{\theta}_n-\theta_n|>r\right)= -\frac{r^2}{2}.
 $$
\end{proof}
\begin{proof} [{\bf Proof of Theorem \ref{cor1}}]
Note that for random variables with nonuniform distribution, obviously we have $\sigma^2 > 0$. Theorem $\ref{cor1}$ is a special case of Theorem $\ref{thm1}$ and the proof is totally similar to that of Theorem $\ref{thm1}$.
\end{proof}

\begin{proof} [{\bf Proof of Theorem \ref{thm2}}]
From Lemma \ref{lemM}, we have
\beq\label{thm-main-1}
\hat \theta_n-\theta_n=\sum_{i=1}^\infty g'(p_{n,i})(\hat p_{n,i}-p_{n,i})+\sum_{i=1}^\infty R_{p_{n,i}}(\hat p_{n,i}).
\deq
By the similar proof of Theorem \ref{thm1}, it is enough to show that
for any $\varepsilon > 0$,
\beq\label{thm-main2-2}
\lim_{n\rightarrow\infty}\frac{1}{b_n^2}\log\pp
\left(\frac{\sqrt{n}}{b_n\sigma_n}\left|\sum_{i=1}^\infty R_{p_{n,i}}(\hat p_{n,i})\right|>\varepsilon\right)= -\infty.
\deq
From Lemma \ref{lemM} and H\"older's inequality, we have
\beq\label{thm-r-2}
\aligned
\left|\sum_{i=1}^\infty R_{p_{n,i}}(\hat p_{n,i})\right|
\leq & M\sum_{i=1}^\infty|\hat p_{n,i}-p_{n,i}|^{\beta+1}=M\sum_{i=1}^\infty|\hat p_{n,i}-p_{n,i}|^{2\beta}|\hat p_{n,i}-p_{n,i}|^{1-\beta}\\
\le & M\left(\sum_{i=1}^\infty|\hat p_{n,i}-p_{n,i}|^{2}\right)^{\beta}\left(\sum_{i=1}^\infty|\hat p_{n,i}-p_{n,i}|\right)^{1-\beta}\\
\le & 2^{1-\beta}M\left(\sum_{i=1}^\infty|\hat p_{n,i}-p_{n,i}|^{2}\right)^{\beta},
\endaligned
\deq
which implies
$$
\aligned
\pp\left(\frac{\sqrt{n}}{b_n\sigma_n}\left|\sum_{i=1}^\infty R_{p_{n,i}}(\hat p_{n,i})\right|>\varepsilon\right)
\le\pp\left(\left(\frac{\sqrt{n}}{b_n\sigma_n}\right)^{1/\beta}\sum_{i=1}^\infty|\hat p_{n,i}-p_{n,i}|^{2}>\left(\frac{\varepsilon}{2^{1-\beta}M}\right)^{1/\beta}\right).
\endaligned
$$
Hence, in order to prove (\ref{thm-main2-2}), it is enough to show that the following claims hold: for any $\varepsilon > 0$,
\beq\label{thm-r1-2}
\lim_{n\to\infty}\frac{1}{b_n^2}\log \pp\left(\frac{1}{(\sigma_n b_n)^{1/\beta}\sqrt{n}^{4-1/\beta}}\left|\sum_{i=1}^\infty\sum_{k=2}^n \sum_{l=1}^{k-1} \left(I_{\{X_{k, n}=a_i\}}-p_{n,i}\right)\left(I_{\{X_{l, n}=a_i\}}-p_{n,i}\right)\right|>\varepsilon\right)=-\infty
\deq
and
\beq\label{thm-r2-3}
\lim_{n\to\infty}\frac{1}{b_n^2}\log \pp\left(\frac{1}{(\sigma_n b_n)^{1/\beta}\sqrt{n}^{4-1/\beta}}\sum_{i=1}^\infty\sum_{k=1}^n\left(I_{\{X_{k, n}=a_i\}}-p_{n,i}\right)^2>\varepsilon\right)=-\infty.
\deq
From the condition $\frac{b_n}{\sqrt{n}\sigma_n^{1/(2\beta-1)}}\to 0$,  we can choose a sequence of positive numbers $\{l_n, n\ge 1\}$ such that
$$
l_n\to\infty\ \ \text{and}\ \ \frac{\sqrt{n}\sigma_n^{1/(2\beta-1)}}{l_n^{\beta/(2\beta-1)}b_n}\to\infty.
$$
As the similar proof as (\ref{thm-r1}), by taking the sequence $u_n=b_n^2l_n$ in (\ref{delta}), we get
$$
\aligned
\Delta_n=O\left(n\sqrt{b_n^2l_n}+nb_n^2l_n+\sqrt{n}(b_n^2l_n)^{3/2}+(b_n^2l_n)^2\right).
\endaligned
$$
 Because of $\beta\in(2^{-1},1)$, it is easy to check
$$
\aligned
\frac{(\sigma_n b_n)^{1/\beta}\sqrt{n}^{4-1/\beta}}{n\sqrt{b_n^2l_n}}
=b_n\sqrt{l_n}\left(\frac{\sqrt{n}\sigma_n^{1/(2\beta-1)}}{l_n^{\beta/(2\beta-1)}b_n}\right)^{2-1/\beta}\to\infty,
\endaligned
$$
$$
\frac{(\sigma_n b_n)^{1/\beta}\sqrt{n}^{4-1/\beta}}{nb_n^2l_n}
=\left(\frac{\sqrt{n}\sigma_n^{1/(2\beta-1)}}{l_n^{\beta/(2\beta-1)}b_n}\right)^{2-1/\beta}\to\infty,
$$
$$
\frac{(\sigma_n b_n)^{1/\beta}\sqrt{n}^{4-1/\beta}}{\sqrt{n}(b_n^2l_n)^{3/2}}
=\frac{\sqrt{n}}{b_n\sqrt{l_n}}\left(\frac{\sqrt{n}\sigma_n^{1/(2\beta-1)}}{l_n^{\beta/(2\beta-1)}b_n}\right)^{2-1/\beta}\to\infty\ \ \ \text{by}\ \frac{\beta}{2\beta-1}>\frac{1}{2}
$$
and
$$
\frac{(\sigma_n b_n)^{1/\beta}\sqrt{n}^{4-1/\beta}}{(b_n^2l_n)^2}=\left(\frac{\sqrt{n}}{b_n\sqrt{l_n}}\right)^2\left(\frac{\sqrt{n}\sigma_n^{1/(2\beta-1)}}{l_n^{\beta/(2\beta-1)}b_n}\right)^{2-1/\beta}\to\infty
$$
which yields that
$$
\aligned
\Delta_n=o\left((\sigma_n b_n)^{1/\beta}\sqrt{n}^{4-1/\beta}\right).
\endaligned
$$
Therefore, by using Lemma \ref{lem-hr}, the claim (\ref{thm-r1-2}) holds. By using the proof of (\ref{thm-r2}), the claim (\ref{thm-r2-3}) holds.
\end{proof}

\begin{lem}\label{lemb}
Let $f$ be a $\beta$-H\"older continuous function in $[0,1]$ for some $\beta\in(2^{-1},1)$. Assume that
$$
b_n\rightarrow\infty\ \ \text{and}\ \ \frac{\sqrt{n}\sigma_n^{1/(2\beta-1)}}{b_n}\to\infty,
$$
then for any $\varepsilon > 0$,
\beq\label{lemb-1}
\lim_{n\to\infty}\frac{1}{b_n^2}\log \pp\left(\left|\sum_{i=1}^\infty \hat p_{n,i}f(\hat p_{n,i})-\sum_{i=1}^\infty  p_{n,i}f(p_{n,i})\right|>\varepsilon\right)=-\infty.
\deq
\end{lem}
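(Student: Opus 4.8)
The plan is to reproduce the architecture of the proof of Lemma~\ref{lemL}, replacing every Lipschitz bound by its $\beta$-H\"older analogue and invoking the remainder estimate \eqref{thm-r-2} from the proof of Theorem~\ref{thm2}. I first record the only way the hypothesis is used: since $g'$ (hence also the $\beta$-H\"older function $f$) is bounded on $[0,1]$, the variance $\sigma_n$ is bounded above, so $\sigma_n^{1/(2\beta-1)}$ is bounded above as well (here $1/(2\beta-1)>0$ because $\beta>1/2$); the assumption $\sqrt n\,\sigma_n^{1/(2\beta-1)}/b_n\to\infty$ therefore forces $\sqrt n/b_n\to\infty$, i.e.\ $n/b_n^2\to\infty$. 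Next I split
\[
\sum_{i=1}^\infty \hat p_{n,i}f(\hat p_{n,i})-\sum_{i=1}^\infty p_{n,i}f(p_{n,i})=A_n+B_n+C_n,
\]
where $A_n=\sum_i(\hat p_{n,i}-p_{n,i})f(p_{n,i})$, $B_n=\sum_i(\hat p_{n,i}-p_{n,i})\bigl(f(\hat p_{n,i})-f(p_{n,i})\bigr)$ and $C_n=\sum_i p_{n,i}\bigl(f(\hat p_{n,i})-f(p_{n,i})\bigr)$, so that it suffices to prove the $-\infty$ limit of $b_n^{-2}\log\pp(\,\cdot\,>\varepsilon)$ for each of $|A_n|,|B_n|,|C_n|$ separately.

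For the linear term I would write $A_n=\frac1n\sum_{k=1}^n\tilde T_{k,n}$ with $\tilde T_{k,n}=\sum_i(I_{\{X_{k,n}=a_i\}}-p_{n,i})f(p_{n,i})$; these are i.i.d., centred, and bounded by $2\|f\|_\infty$, so a standard Hoeffding bound (playing the role that \eqref{thm-main1} plays in Lemma~\ref{lemL}) gives $\pp(|A_n|>\varepsilon)\le 2e^{-c_\varepsilon n}$, and $n/b_n^2\to\infty$ makes $b_n^{-2}\log\pp(|A_n|>\varepsilon)\to-\infty$. For the two remaining terms I would use $|f(x)-f(y)|\le K|x-y|^\beta$. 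The same H\"older-inequality computation as in \eqref{thm-r-2} gives
\[
|B_n|\le K\sum_{i=1}^\infty|\hat p_{n,i}-p_{n,i}|^{1+\beta}\le 2^{1-\beta}K\Bigl(\sum_{i=1}^\infty(\hat p_{n,i}-p_{n,i})^2\Bigr)^{\beta},
\]
and for $C_n$, Jensen's inequality applied to the concave map $t\mapsto t^\beta$ with the weights $p_{n,i}$ (which sum to $1$), followed by Cauchy--Schwarz together with $\sum_i p_{n,i}^2\le1$, gives
\[
|C_n|\le K\Bigl(\sum_{i=1}^\infty p_{n,i}|\hat p_{n,i}-p_{n,i}|\Bigr)^{\beta}\le K\Bigl(\sum_{i=1}^\infty(\hat p_{n,i}-p_{n,i})^2\Bigr)^{\beta/2}.
\]
Thus each of $\{|B_n|>\varepsilon\}$ and $\{|C_n|>\varepsilon\}$ lies in an event $\{\sum_i(\hat p_{n,i}-p_{n,i})^2>c\}$ with $c=c(\varepsilon,\beta,K)>0$ fixed, and the whole lemma reduces to proving $b_n^{-2}\log\pp\bigl(\sum_i(\hat p_{n,i}-p_{n,i})^2>c\bigr)\to-\infty$.

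To prove this last estimate I would reuse the decomposition \eqref{thm-r},
\[
\sum_{i=1}^\infty(\hat p_{n,i}-p_{n,i})^2=\frac{1}{n^2}\sum_{i=1}^\infty\sum_{k=1}^n\bigl(I_{\{X_{k,n}=a_i\}}-p_{n,i}\bigr)^2+\frac{2}{n^2}U_n,
\]
where $U_n=\sum_{k=2}^n\sum_{l=1}^{k-1}g_{k,l}(X_{k,n},X_{l,n})$ is the degenerate $U$-statistic from the proof of Theorem~\ref{thm1}. The diagonal part is deterministically at most $2/n\to0$, so for large $n$ it cannot exceed $c/2$ and its event has probability $0$. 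For the off-diagonal part I would apply Lemma~\ref{lem-hr} with the bounds $C\le 2n$, $D\le 2n$, $F\le 2n$, $B\le 2\sqrt n$, $A\le 2$ already established there, but now taking $u_n=b_n\sqrt n$ in \eqref{delta}. Since $b_n=o(\sqrt n)$, each term of $\Delta_n=O\bigl(n\sqrt{u_n}+nu_n+\sqrt n\,u_n^{3/2}+u_n^2\bigr)$ is $o(n^2)$, so eventually $\Delta_n<cn^2/4$ and hence $\pp(|U_n|>cn^2/4)\le\pp(|U_n|>\Delta_n)\le 6e^{-u_n}$; dividing by $b_n^2$ and using $u_n/b_n^2=\sqrt n/b_n\to\infty$ yields the $-\infty$ limit.

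The only delicate point is this $U$-statistic step, namely calibrating $u_n$ so that Lemma~\ref{lem-hr}'s deterministic threshold $\Delta_n$ remains below $cn^2/4$ while at the same time $u_n/b_n^2\to\infty$ drives the rate to $-\infty$; this is exactly where the condition $b_n=o(\sqrt n)$ is consumed. Compared with the proof of Theorem~\ref{thm2} the step is in fact easier, since here the threshold is a fixed multiple of $n^2$ rather than the shrinking quantity $(\sigma_n b_n)^{1/\beta}\sqrt{n}^{4-1/\beta}$, so the crude choice $u_n=b_n\sqrt n$ already works and the exponent $\beta$ enters only through the preliminary reductions of $B_n$ and $C_n$.
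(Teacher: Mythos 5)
Your proof is correct, and it keeps the same skeleton as the paper's own proof: the identical three-term decomposition into $A_n$, $B_n$, $C_n$, the remainder estimate \eqref{thm-r-2} for $B_n$, and the reduction of everything to a superexponential tail bound for $\sum_i(\hat p_{n,i}-p_{n,i})^2$. The differences are in execution, and each is defensible or an outright improvement. For $A_n$ the paper checks $\sqrt n/(b_n\sigma_n)\to\infty$ and then invokes ``the similar proof of \eqref{thm-main1}''; this is slightly loose, since \eqref{thm-main1} concerns sums built from $g'$ normalised by the associated $\sigma_n$, whereas here the summand involves $f$ (whose variance is a different quantity); your Hoeffding bound for the bounded, centred, i.i.d.\ variables $\tilde T_{k,n}$, combined with $n/b_n^2\to\infty$, sidesteps this mismatch cleanly. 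For $C_n$ you use Jensen (concavity of $t\mapsto t^\beta$ with weights $p_{n,i}$) followed by Cauchy--Schwarz and $\sum_i p_{n,i}^2\le 1$, where the paper uses H\"older with the exponent pair $\bigl(2/(2-\beta),\,2/\beta\bigr)$; both routes land on $|C_n|\le K\bigl(\sum_i(\hat p_{n,i}-p_{n,i})^2\bigr)^{\beta/2}$ and are equally valid. Finally, for the key tail estimate the paper simply cites \eqref{thm-main2-2}, i.e.\ re-uses the delicate calibration of $u_n$ from Theorem \ref{thm2} against the shrinking threshold $(\sigma_n b_n)^{1/\beta}\sqrt n^{\,4-1/\beta}$ and then observes the growing prefactor makes the fixed-threshold event eventually smaller; you instead rerun Lemma \ref{lem-hr} directly with the crude choice $u_n=b_n\sqrt n$ in \eqref{delta} against the fixed threshold $cn^2/4$, which works precisely because the threshold here does not shrink, and it makes the step self-contained. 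A pleasant byproduct worth stating explicitly: your argument consumes the hypothesis only through $\sqrt n/b_n\to\infty$ (the factor $\sigma_n^{1/(2\beta-1)}$ being bounded above anyway, as $g'$ is bounded and $1/(2\beta-1)>0$), so your proof actually establishes the lemma under the weaker assumption $b_n\to\infty$, $b_n/\sqrt n\to 0$, matching the hypothesis of the Lipschitz analogue, Lemma \ref{lemL}.
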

\begin{proof}
Firstly, we have
\begin{align*}
&\sum_{i=1}^\infty \hat p_{n,i}f(\hat p_{n,i})-\sum_{i=1}^\infty  p_{n,i}f(p_{n,i})\\
=& \sum_{i=1}^\infty (\hat p_{n,i}-p_{n,i})f(p_{n,i})+ \sum_{i=1}^\infty (\hat p_{n,i}-p_{n,i})(f(\hat p_{n,i})-f(p_{n,i}))\\
&\ \ \ \ \ \ +\sum_{i=1}^\infty p_{n,i}(f(\hat p_{n,i})-f(p_{n,i})).
\end{align*}
From
$$
\frac{\sqrt{n}}{b_n\sigma_n}=\frac{\sqrt{n}\sigma_n^{1/(2\beta-1)}}{b_n}\frac{1}{\sigma_n^{2\beta/(2\beta-1)}},
$$
then we have $\frac{\sqrt{n}}{b_n\sigma_n}\to \infty$.
By using similar proof of (\ref{thm-main1}),  it is not difficult to see that
\beq\label{lemb-2}
\frac{1}{b_n^2}\log \pp\left(\left|\sum_{i=1}^\infty (\hat p_{n,i}-p_{n,i})f(p_{n,i})\right|>\varepsilon\right)\to -\infty.
\deq
Similarly, from (\ref{thm-main2-2}), we have
\beq\label{lemb-3}
\aligned
&\frac{1}{b_n^2}\log \pp\left(\left| \sum_{i=1}^\infty (\hat p_{n,i}-p_{n,i})(f(\hat p_{n,i})-f(p_{n,i}))\right|>\varepsilon\right)\\
\le &\frac{1}{b_n^2}\log \pp\left(2^{1-\beta}M\left(\sum_{i=1}^\infty|\hat p_{n,i}-p_{n,i}|^{2}\right)^{\beta}>\varepsilon\right)\to-\infty.
\endaligned
\deq
Furthermore, by using H\"older's inequality, we get
$$
\aligned
\sum_{i=1}^\infty p_{n,i}|f(\hat p_{n,i})-f(p_{n,i})|\le & M\sum_{i=1}^\infty p_{n,i}|\hat p_{n,i}-p_{n,i}|^\beta\\
\le & M\left(\sum_{i=1}^\infty p_{n,i}^{2/(2-\beta)}\right)^{(2-\beta)/2}\left(\sum_{i=1}^\infty |\hat p_{n,i}-p_{n,i}|^2\right)^{\beta/2}
\endaligned
$$
which, together with (\ref{lemb-3}), implies
\beq\label{lemb-4}
\aligned
&\frac{1}{b_n^2}\log \pp\left(\left|\sum_{i=1}^\infty p_{n,i}(f(\hat p_{n,i})-f(p_{n,i}))\right|>\varepsilon\right)\\
\le & \frac{1}{b_n^2}\log \pp\left(M^2\left(\sum_{i=1}^\infty |\hat p_{n,i}-p_{n,i}|^2\right)^{\beta}>\varepsilon^2\right) \to -\infty.
\endaligned
\deq
Based on the discussions, the desired result can be obtained.
\end{proof}
\begin{proof}[{\bf Proof of Corollary \ref{cor2b}}]
Since $g'$ is $\beta$-H\"older continuous, $g'$ is also bounded, i.e., there exists a positive constant $M$, such that
$$
\left|\sum_{i=1}^{\infty}\hat p_{n,i}g'\left( \hat p_{n,i}\right)\right|\le M \ \ \text{and} \ \ \left|\sum_{i=1}^{\infty}p_{n,i}g'\left( p_{n,i}\right)\right|\le M.
$$
Moreover, there exists a positive constant $K$, such that for any $x, y\in[0,1]$,
$$
\left|(g'(x))^2-(g'(y))^2\right|\le \left|g'(x)+g'(y)\right|\left|g'(x)-g'(y)\right|\le 2KM|x-y|^\beta,
$$
namely, $(g')^2$ is also $\beta$-H\"older continuous. Hence, by using similar proof as Corollary \ref{cor2}, the desired result can be obtained.
\end{proof}
\begin{proof}[{\bf Proof of Theorem \ref{cor1b}}] The proof is similar as Theorem \ref{cor1}.
\end{proof}

\section*{Disclosure statement}
No potential conflict of interest was reported by the authors.


\begin{thebibliography}{99}

\bibitem{2} A. Dembo and O. Zeitouni, Large deviations techniques and applications. Second edition.  Springer-Verlag, New York, (1998).


\bibitem{G-C-Z} M. Grabchak, L. J. Cao and Z. Y. Zhang,   Authorship attribution using diversity profiles.
J. Quant. Linguist. 25 (2018), no. 2, 142-155.

\bibitem{G} M. Grabchak, E. Marcon, G. Lang and Z. Y. Zhang, The generalized Simpson's entropy is a measure of biodiversity. PLoS one. 12 (2017), no. 3, e0173305.


\bibitem{G-Z} M. Grabchak and Z. Y. Zhang, Asymptotic normality for plug-in estimators of diversity indices on countable alphabets. J. Nonparametr. Stat. 30 (2018), no. 3, 774-795.

\bibitem{G-Z-Z} M. Grabchak, Z. Y. Zhang and D. T. Zhang, Authorship attribution using entropy. J.
Quant. Linguist. 20(2013), no. 4, 301-313.

\bibitem{H-R} C. Houdr\'e, and P. Reynaud-Bouret,  Exponential inequalities, with constants, for $U$-statistics of order two. Stochastic inequalities and applications, 55-69, Progr. Probab., 56, Birkh\"auser, Basel, (2003).

\bibitem{P} L. Paninski, Estimation of entropy and mutual information. Neural Comput. 15 (2003) no. 6, 1191-1253.

\bibitem{P-T} G. P. Patil and C. Taillie, Diversity as a concept and its measurement. J. Amer. Statist. Assoc. 77 (1982), no. 379, 548-567.

\bibitem{Shannon} C. Shannon, A mathematical theory of communication. Bell System Tech. J. 27 (1948), 379-423.

\bibitem{Simpson} E. H. Simpson, Measurement of diversity, Nature 163 (1949), 688.

\bibitem{Z-G} Z. Y. Zhang and M. Grabchak, Entropic representation and estimation of diversity indices. J. Nonparametr. Stat. 28 (2016), no. 3, 563-575.

\bibitem{ZhZh12} Z. Y. Zhang and X. Zhang, A normal law for the plug-in estimator of entropy. IEEE Trans. Inform. Theory 58 (2012), no. 5, 2745-2747.

\bibitem{ZhZh10} Z. Y. Zhang and J. Zhou, Re-parameterization of multinomial distributions and diversity indices. J. Statist. Plann. Inference 140 (2010), no. 7, 1731-1738.





\end{thebibliography}
\end{document}